\documentclass{article}

\usepackage{microtype}
\usepackage{graphicx}
\usepackage{subcaption}
\usepackage{booktabs}
\usepackage{paracol}
\usepackage{enumitem}
\usepackage{amsmath}
\usepackage{amssymb}
\usepackage{mathtools}
\usepackage{amsthm}
\DeclareMathOperator*{\argmin}{arg \, min}
\usepackage{hyperref}

\usepackage[preprint]{icml2026}
\usepackage[capitalize,noabbrev]{cleveref}

\theoremstyle{plain}
\newtheorem{theorem}{Theorem}[section]
\newtheorem{proposition}[theorem]{Proposition}

\newtheorem{corollary}[theorem]{Corollary}
\theoremstyle{definition}
\newtheorem{definition}[theorem]{Definition}

\theoremstyle{remark}
\newtheorem{remark}[theorem]{Remark}

\icmltitlerunning{Accelerating Newton-Schulz Iteration for  Orthogonalization via Chebyshev-type Polynomials}

\begin{document}

\twocolumn[
  \icmltitle{Accelerating Newton-Schulz Iteration for Orthogonalization via Chebyshev-type Polynomials}

  \icmlsetsymbol{equal}{*}

  \begin{icmlauthorlist}
    \icmlauthor{Ekaterina Grishina}{equal,hse}
    \icmlauthor{Matvey Smirnov}{equal,hse}
    \icmlauthor{Maxim Rakhuba}{hse}
  \end{icmlauthorlist}

  \icmlaffiliation{hse}{HSE University}

  \icmlcorrespondingauthor{Ekaterina Grishina}{er.grishina@yandex.ru}

  \vskip 0.3in
]

\printAffiliationsAndNotice{}  

\begin{abstract}
  The problem of computing optimal orthogonal approximation to a given matrix has attracted growing interest in machine learning. Notable applications include the recent Muon optimizer or Riemannian optimization on the Stiefel manifold. Among existing approaches, the Newton-Schulz iteration has emerged as a particularly effective solution, as it relies solely on matrix multiplications and thus achieves high computational efficiency on GPU hardware. Despite its efficiency, the method has inherent limitations — its coefficients are fixed and thus not optimized for a given matrix. In this paper we address this issue by proposing a Chebyshev-optimized version of Newton-Schulz (CANS). Based on the Chebyshev's alternance theorem, we theoretically derive optimal coefficients for the 3-rd order Newton-Schulz iteration and apply a Remez algorithm to compute optimal higher-degree polynomials. We leverage these polynomials to construct controlled approximate orthogonalization schemes, which is of interest in deep learning applications. Practically, we demonstrate the method's effectiveness in two key applications: orthogonalization in the Muon optimizer, and providing an efficient retraction alternative for Riemannian optimization on the Stiefel manifold.
\end{abstract}

\section{Introduction}
Polar decomposition of a matrix $X\in\mathbb{R}^{m\times n}, m\geq n$ is a factorization $X=WH$, where $W \in\mathbb{R}^{m\times n}$ has orthonormal columns and $H \in\mathbb{R}^{n\times n}$ is a positive semidefinite symmetric matrix (or Hermitian in the complex case). 
An important application of the polar decomposition is the orthogonal Procrustes problem:
\[\min_{Q: \, Q^TQ=I}\|Q-X\|_F,\]
with the solution being $Q=W$ the polar factor of $X$. For generalization,
see~\citep{schonemann1966generalized}.

Polar decomposition can be computed directly using the singular value decomposition $X=USV^T$, which immediately leads to $W=UV^T, H=VSV^T$. However, calculating the SVD can be costly for many applications. There are several iterative methods available, including Newton \citep{kenney1992scaling} and Halley's methods \citep{nakatsukasa2010optimizing}, which require matrix inversion. In this work, we consider the Newton-Schulz iteration \citep{bjorck1971iterative, kovarik1970some, higham2008functions}, which only requires matrix multiplication:
\begin{equation}
\label{eq:classical_ns}
X_{k+1}=\frac{3}{2}X_k-\frac{1}{2}X_kX_k^TX_k, \quad X_1 = X.\end{equation}
This iteration converges to the orthogonal factor of the polar decomposition if $\sigma_1(X)<\sqrt{3}$ and $\sigma_n(X)>0$. Classical Newton-Schulz iteration can be also extended to higher degrees \citep{bernstein2024modular}:
\[\begin{aligned}X_{k+1}=&\alpha_1^kX_k+\alpha_3^kX_kX_k^TX_k +\alpha_5^kX_k(X_k^TX_k)^2+\dots \\& +\alpha_{2t+1}^kX_k(X_k^TX_k)^t,\end{aligned}\]
which can be rewritten using SVD of $X_k = US_k V^T$ as follows:
\[\begin{aligned}X_{k+1} &= U(\alpha^k_1S_k+\alpha^k_3S^3_k+\alpha^k_5S^5_k+\dots+\alpha^k_{2d+1}S^{2d+1}_k)V^T\\& = U p_k(S_k)V^T.\end{aligned}\]
In order for these iterations to converge to the orthogonal polar factor, the composition of polynomials $p_k(p_{k-1}(\dots p_1(x)))$ should converge to the unity function $f \equiv 1$ on the segment $[\sigma_n(X), \sigma_1(X)]$. Indeed,  the desired property is:
\begin{equation} \label{eq:polybound}
\begin{split}
    &\|X_{k+1} - UV^T\|_2 
    = \|U(p_k(S_k) - I)V^T\|_2  \\
    & = \|p_k(S_k) - I\|_2 \\
    &= \|p_k(p_{k-1}(\dots p_1(S)))  - I\|_2  \\
    &=\max_i |p_k(p_{k-1}(\dots p_1(s_i))) - 1| \\
    &\leq \max_{s\in[\sigma_n, \sigma_1]} |p_k(p_{k-1}(\dots p_1(s))) - 1| \to 0, \quad k\to \infty,
\end{split}
\end{equation}
 where we used orthogonal invariance of the spectral norm.
 However, in some applications (e.g., Muon optimizer), high orthogonalization accuracy may not be necessary and finding an approximation of $f\equiv 1$ with an error $\varepsilon$ is sufficient. This allows to balance between accuracy and efficiency when selecting polynomials.

In this work, we propose algorithms for optimizing the coefficients of the classical Newton-Schulz method, based on the Chebyshev alternation theorem. This framework, which we call \emph{Chebyshev-accelerated Newton-Schulz (CANS)}, enables us to obtain polynomials with the desired properties and accelerated convergence. Our main contributions are:
\begin{itemize}
\item We derive theory for finding odd polynomials that optimally approximate the unity function on a given segment $[a, b]$
(Section~\ref{sec:optimalodd}). This leads us to explicit formulas when $p_k$ are of degree 3 and Remez algorithm for larger degrees. Given the bounds on the  
singular values, these polynomials lead to methods that outperform Newton-Schulz (Section~\ref{sec:CANS}). 
\item We develop new polynomials that are confined within the interval $[1-\delta, 1+\delta]$ with a user-specified $\delta$ (inexact orthogonalization), while maximizing the polynomial derivative in the vicinity of zero (Section~\ref{sec:maxderiv}). This is motivated by the needs of the orthogonalization procedure of the \emph{Muon} optimizer~\citep{jordan2024muon}.  For the same target $\delta$, our polynomials achieve a larger derivative compared to original Muon polynomial and those from~\citep{jiacheng}, and yield faster convergence of the optimizer when training the NanoGPT (Section \ref{sec:muon}).
\item We further demonstrate that by maximizing the derivative at the origin, our inexact orthogonalization polynomials can serve as an effective preprocessing step for an iterative method of choice. This is particularly useful when information about the smallest singular value is not available. We also show that the largest singular value can be accurately approximated via Gelfand's formula with negligible computational overhead (Section~\ref{sec:gelfand}).
\item In Section \ref{sec:riemannian}, we demonstrate the application of CANS for building an efficient retraction on the Stiefel manifold, which speeds up training of WideResNet with orthogonal constraints.
\end{itemize}

\section{Related work}
\textbf{Iterative methods.} First iterative method for the orthogonal polar factor, based on Taylor series expansion, was introduced in \citep{bjorck1971iterative, kovarik1970some}. The work \citep{higham1990fast} developed an algorithm balancing inversion and multiplication. Subsequent methods like scaled Newton \citep{higham2008functions}, Halley’s method, QDWH \citep{nakatsukasa2010optimizing}, and Zolo-pd \citep{nakatsukasa2016computing} improved convergence but require matrix inversion or QR, which is less GPU-friendly than pure matrix multiplications. The stability of these methods is analyzed in \citep{nakatsukasa2012backward}.
Scaling of Newton-Schulz iteration 
was explored in \citep{chen2014stable, chen2014newton}. Notably, the polynomials derived in \citep{chen2014stable} align with our formula for optimal third-degree polynomials, although our approach is applicable for higher degree polynomials.
Concurrently with our work, \citet{amsel2025polar} also studied optimal polynomials for the Newton-Schulz iteration. They independently derived the same optimal third-order polynomial (which also matches the formula in \citep{chen2014newton}) and the same recursive scheme for polynomial composition (see Eq.~\ref{eq:explicit3}). While \citet{amsel2025polar} prove the optimality of such composition, their method and analysis is restricted to the exact case. In contrast, our work focuses primarily on the inexact case, introducing a method to construct polynomials that satisfy a given tolerance $\delta$ while also maximizing derivatives at zero to accelerate the convergence of smaller singular values. A further distinction concerns the use of Gelfand's formula.

\textbf{Deep learning.} In neural networks, Newton-Schulz iteration is applied for enforcing orthonormality of the weight matrices \citep{anil2019sorting}. Its computational efficiency has made it particularly valuable for optimizers requiring orthogonalization, including Muon \citep{jordan2024muon, bernstein2024modular} and Scion \citep{pethick2025training}. Related approaches have employed Newton iteration for computing matrix p-th roots in other optimizers \citep{anil2020scalable}.

\textbf{Riemannian optimization.} In Riemannian optimization on the Stiefel manifold, polar decomposition is one of the possible retractions \citep{absil2009optimization} to the manifold, alongside Cayley transform \citep{li2020efficient, zhu2017riemannian, gao2021riemannian} and QR.

\section{Optimal odd polynomials and Newton-Schulz iterations}

\subsection{Optimal odd polynomials} \label{sec:optimalodd}
As stated in~\eqref{eq:polybound}, our goal is to find an odd polynomial that best approximates the unity function $f\equiv1$ on a given segment,
in which the singular values of the matrix fall $[\sigma_n(X), \sigma_1(X)] \in [a, b]$.

By $L_n$ we shall denote the space of odd polynomials of degree $2n-1$, that is, 
\[L_n = \{\alpha_1x + \dots + \alpha_{2n-1}x^{2n-1}: \alpha_1,\dots, \alpha_{2n-1} \in \mathbb R\}.\] 
Note that $\dim L_n = n$.  Now fix $0 < a < b$ and $n \in \mathbb N$. We endow the space $C[a,b]$ with its standard norm, i.e. $\|f\|_{C[a,b]}=\max_{t\in[a, b]}|f(t)|$. For a function $f \in C[a,b]$ we consider the problem of finding $p \in L_n$ such that $\|f - p\|_{C[a,b]} = \min \{\|f - q\|_{C[a,b]}: q \in L_n\}$. A polynomial $p$ with the foregoing property we shall call {\it{the best uniform odd polynomial approximation}} of $f$ {\it{of degree}} $2n-1$. Since we do not consider approximations in any other sense, we shall use a shorter term {\it{best odd polynomial approximation}} omitting the explicit mention of the degree, if it is clear from the context. The powerful method of studying best polynomial approximations is provided by the Chebyshev equioscillation theorem (see~\citep[Section~10]{Trefethen2020} for classical formulation, and~\citep[Theorem~5]{Hormander2018} for the general version). In our case it reduces to the following fact.

\begin{theorem}\label{th:Chebyshev}
    Let $0 < a < b$, $n \in \mathbb N$, and $f \in C[a,b]$. Then the following statements hold.
    \begin{enumerate}[label=(\roman*)]
        \item\label{th:Chebyshevi} The best odd polynomial approximation of $f$ is unique.
        \item\label{th:Chebyshevii} $p \in L_n$ is the best odd polynomial approximation of $f$ of degree $2n-1$ if and only if there exist points $x_0 < x_1 < \dots < x_n$ on the interval $[a,b]$ such that $|p(x_j)-f(x_j)| = \|p - f\|_{C[a,b]}$ for all $j = 0, \dots, n$ and $p(x_j) - f(x_j) = -(p(x_{j-1}) - f(x_{j-1}))$ for all $j = 1, \dots, n$.
    \end{enumerate}
\end{theorem}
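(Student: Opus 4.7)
The plan is to deduce Theorem~\ref{th:Chebyshev} from the general Chebyshev equioscillation theorem for Haar (Chebyshev) systems, cited here as~\cite[Theorem~5]{Hormander2018}. The only nontrivial thing that requires verification is that the $n$-dimensional subspace $L_n \subset C[a,b]$ is a Haar system on $[a,b]$; once this is established, both~\ref{th:Chebyshevi} and~\ref{th:Chebyshevii} are immediate restatements of the general result (uniqueness of the best approximation plus an alternation set of size $n+1$ for a Chebyshev system of dimension $n$).

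To establish the Haar property, I would take any nonzero $p \in L_n$ and factor out $x$, writing $p(x) = x\,q(x^2)$ with
\[
q(y) = \alpha_1 + \alpha_3\,y + \alpha_5\,y^2 + \dots + \alpha_{2n-1}\,y^{n-1},
\]
a nonzero real polynomial of degree at most $n-1$. Because $[a,b] \subset (0,\infty)$, the factor $x$ has no zero on $[a,b]$, so the zeros of $p$ on $[a,b]$ are in bijection, via the substitution $y = x^2$, with the zeros of $q$ on $[a^2,b^2]$. Since $q$ is nonzero and of degree at most $n-1$, it has at most $n-1$ real roots; hence $p$ has at most $n-1$ zeros on $[a,b]$. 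This is exactly the defining property of a Haar system of dimension $n$ on $[a,b]$.

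With $L_n$ identified as a Haar system, the general equioscillation theorem delivers both uniqueness and the alternation characterization with $n+1$ extremal points as stated in~\ref{th:Chebyshevii}; existence of a minimizer is standard finite-dimensional compactness (the continuous functional $p \mapsto \|f-p\|_{C[a,b]}$ is coercive on $L_n$) and is already absorbed in the general reference.

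The only genuinely nontrivial step, therefore, is the Haar-system verification, but this is a one-line consequence of the positivity $a > 0$ and the observation that a polynomial of degree $\le n-1$ has at most $n-1$ real roots. If a fully self-contained proof were desired instead, the effort would shift to reproving the classical equioscillation theorem via de~la~Vallée~Poussin's exchange argument, in which the Haar property is precisely the input needed to bound the number of sign changes of the error function; given our verification above, that argument would go through verbatim on $L_n$.
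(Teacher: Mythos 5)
Your proof is correct and follows the same route the paper takes: the paper states that the result ``reduces'' from the general equioscillation theorem (citing Hörmander), which is precisely an appeal to the Haar/Chebyshev-system version, and it does not spell out the verification. Your one-line check that $L_n$ is a Haar system of dimension $n$ on $[a,b]$ --- factoring a nonzero $p \in L_n$ as $p(x) = x\,q(x^2)$ with $\deg q \le n-1$, and using $a>0$ so that $x \mapsto x^2$ bijects $[a,b]$ onto $[a^2,b^2]$ --- is exactly the missing detail, and it is correct.
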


\begin{proof}
See Appendix \ref{apx:proof_thm_1}.
\end{proof}
 The points $x_0, \dots, x_n$ from Theorem~\ref{th:Chebyshev} are said to form the {\it{Chebyshev alternance for}} $p - f$. 

We shall need further properties of the best odd polynomial approximation of the unity function $f \equiv 1$. Given $0 < a < b$ and $n \in \mathbb N$ we denote by $p_{n,a,b}$ the best degree $2n-1$ odd polynomial approximation of the unity on the interval $[a,b]$ and by $\varepsilon(n,a,b)$ we denote the value $\|p_{n,a,b} - 1\|_{C[a,b]}$. The following proposition contains basic properties of $p_{n,a,b}$.

\begin{proposition}\label{prop:properties}
    Let $0 < a < b$ and let $n \in \mathbb N$. Then the following statements hold.
    \begin{enumerate}[label=(\roman*)]
        \item\label{prop:propertiesi} If $x_0 <  \dots< x_n$ are the points of the Chebyshev alternance for $p_{n,a,b} - 1$, then $x_0 = a$ and $x_n = b$.
        \item\label{prop:propertiesii} If $\varepsilon = \|p_{n,a,b} - 1\|_{C[a,b]}$, then $p_{n,a,b}(x_j) = 1 - (-1)^j \varepsilon$  for all $j = 0, \dots, n$.
        \item\label{prop:propertiesiii} The derivative $p_{n,a,b}^{\prime}(x)$ attains a local maximum at $x = 0$ and decreases on the interval $[0, x_1]$. Moreover, $p_{n,a,b}^{\prime}(0) \ge (1-\varepsilon)/a$.
        \item\label{prop:propertiesiv} For any $t > 0$ we have $\varepsilon(n, ta, tb) = \varepsilon(n,a,b)$ and $p_{n,ta,tb}(tx) = p_{n,a,b}(x)$.
    \end{enumerate}
\end{proposition}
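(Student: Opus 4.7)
The unifying observation is that $p := p_{n,a,b}$ lies in $L_n$, hence its derivative $p'$ is an even polynomial of degree $2n-2$ with at most $n-1$ positive roots. This single fact controls both the location of the alternance points and the shape of $p$.

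For~\ref{prop:propertiesi}, every interior alternance point $x_j\in(a,b)$ is a smooth local extremum of $p-1$ (which does not vanish there, since $\varepsilon>0$), so $p'(x_j)=0$. At most $n-1$ such points exist, so at least two of the $n+1$ alternance points must be endpoints of $[a,b]$, forcing $x_0=a$ and $x_n=b$. For~\ref{prop:propertiesii}, the alternance oscillates, so it suffices to rule out $p(a)=1+\varepsilon$. If this held, then $p(x_1)=1-\varepsilon$, and since $p(0)=0$ the maximum of $p$ on $[0,x_1]$ would be attained at some $x^\ast\in(0,x_1)$ with $p'(x^\ast)=0$. But the count from~\ref{prop:propertiesi} shows that the positive roots of $p'$ are exactly the interior alternance points $x_1,\dots,x_{n-1}$, none of which lies in $(0,x_1)$---contradiction.

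For~\ref{prop:propertiesiii}, since $p'$ has exactly $2n-2$ roots and we have identified $\pm x_1,\dots,\pm x_{n-1}$, we obtain the factorization $p'(x) = C\prod_{j=1}^{n-1}(x^2-x_j^2)$. A direct sign check on $(0,x_1)$ yields $p''(x)<0$ there, so $p'$ is strictly decreasing on $[0,x_1]$; evenness of $p'$ then identifies $x=0$ as a local maximum. The lower bound $p'(0)\ge(1-\varepsilon)/a$ follows by applying the mean value theorem to $p$ on $[0,a]$ (using $p(0)=0$ and $p(a)=1-\varepsilon$ from~\ref{prop:propertiesii}) together with the monotonicity of $p'$ on $[0,a]\subset[0,x_1]$. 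Statement~\ref{prop:propertiesiv} is then immediate: the substitution $\tilde p(y):=p(y/t)$ is a bijection $L_n\to L_n$ that preserves the supremum norm after rescaling the interval, so uniqueness in Theorem~\ref{th:Chebyshev} transports the optimizer and error from $[a,b]$ to $[ta,tb]$.

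The main obstacle is~\ref{prop:propertiesii}: Theorem~\ref{th:Chebyshev} alone only asserts that the alternance oscillates, so pinning down the starting sign requires the extra structural input $p(0)=0$ together with the complete description of the zeros of $p'$ obtained in~\ref{prop:propertiesi}. The remaining parts then follow from a careful, but routine, analysis of this factorization.
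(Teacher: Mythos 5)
Your proof is correct and follows essentially the same route as the paper: identify the $2n-2$ roots of $p'$ as $\pm x_1,\dots,\pm x_{n-1}$ to pin down the alternance endpoints for~\ref{prop:propertiesi}, rule out $p(a)=1+\varepsilon$ by producing an extraneous root of $p'$ for~\ref{prop:propertiesii}, establish monotonicity of $p'$ on $[0,x_1]$ and apply the mean value theorem on $[0,a]$ for~\ref{prop:propertiesiii}, and rescale for~\ref{prop:propertiesiv}. The only cosmetic difference is in~\ref{prop:propertiesiii}, where you deduce the sign of $p''$ from the explicit factorization $p'(x)=C\prod_j(x^2-x_j^2)$, whereas the paper counts the roots of $p''$ via Rolle's theorem and parity; both reduce to the same root-counting idea, and both require the small extra observation (which you leave implicit) that $p'>0$ somewhere on $(0,a)$ to fix the sign.
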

\begin{proof} See Appendix \ref{apx:chebyshev_properties}.
\end{proof}

Using the foregoing proposition it is easy to find a closed-form expression for $p_{2,a,b}$.

\begin{proposition}\label{prop:explicit}  Let $0<a<b$.
Then
\[p_{2,a,b}=\frac{2\left(\left(a^{2}+a b+b^{2}\right) x-x^{3}\right)}{2\left(\frac{a^{2}+a b+b^{2}}{3}\right)^{3 / 2}+a^{2} b+b^{2} a}.\]
Moreover, this polynomial attains its maximum on $[a, b]$ at $x=e=\sqrt{\left(a^{2}+a b+b^{2}\right) / 3}$. Finally,
\begin{equation}
\label{eq:appr_error}
\begin{aligned}
\varepsilon(2,a,b)&=\|p_{2,a,b}-1\|_{C[a, b]}=\\&=\frac{2\left(\frac{a^{2}+a b+b^{2}}{3}\right)^{3 / 2}-a^{2} b-b^{2} a}{2\left(\frac{a^{2}+a b+b^{2}}{3}\right)^{3 / 2}+a^{2} b+b^{2} a} .
\end{aligned}
\end{equation}
\end{proposition}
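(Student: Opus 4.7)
The plan is to use the Chebyshev alternance characterization from Theorem~\ref{th:Chebyshev}\ref{th:Chebyshevii}, together with the structural properties from Proposition~\ref{prop:properties}, to pin down the three unknowns in the cubic odd polynomial $p(x) = \alpha_1 x + \alpha_3 x^3$ and the error $\varepsilon$.

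First I would write the candidate in the form $p(x) = \alpha_1 x + \alpha_3 x^3$ and invoke Proposition~\ref{prop:properties}\ref{prop:propertiesi} together with \ref{prop:propertiesii}: there is a Chebyshev alternance $x_0 = a < x_1 < x_2 = b$ such that
\[
p(a) = 1 - \varepsilon, \quad p(x_1) = 1 + \varepsilon, \quad p(b) = 1 - \varepsilon.
\]
In particular $p(a) = p(b)$, and since $x_1 \in (a,b)$ is an interior extremum of $p$ at which $|p-1|$ is attained, we have $p^\prime(x_1) = 0$. These two facts give the algebraic relations
\[
\alpha_1(b-a) + \alpha_3(b^3 - a^3) = 0, \qquad \alpha_1 + 3\alpha_3 x_1^2 = 0,
\]
from which I would immediately read off $\alpha_1 = -\alpha_3 (a^2 + ab + b^2)$ and then $x_1^2 = (a^2+ab+b^2)/3 = e^2$. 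This identifies the interior alternance point as $x_1 = e$.

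The remaining unknown $\alpha_3$ (or equivalently the overall scale) I would fix by adding the first two alternance equations: $p(e) + p(a) = 2$. A short computation using $p(x) = \alpha_3\bigl(x^3 - 3e^2 x\bigr)$ gives
\[
p(e) = -2\alpha_3 e^3, \qquad p(a) = -\alpha_3 ab(a+b),
\]
so that
\[
\alpha_3 = -\frac{2}{2e^3 + a^2 b + ab^2}, \qquad \alpha_1 = \frac{2(a^2 + ab + b^2)}{2e^3 + a^2 b + ab^2}.
\]
Substituting $e^3 = \bigl((a^2+ab+b^2)/3\bigr)^{3/2}$ yields the closed-form expression for $p_{2,a,b}$ claimed in the proposition, and subtracting $1$ from $p(e) = 1+\varepsilon = -2\alpha_3 e^3$ yields the stated formula for $\varepsilon(2,a,b)$.

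Finally, to justify that this polynomial is indeed $p_{2,a,b}$, I would invoke Theorem~\ref{th:Chebyshev}\ref{th:Chebyshevii}: the constructed $p$ satisfies $p(a) - 1 = -\varepsilon$, $p(e) - 1 = +\varepsilon$, $p(b) - 1 = -\varepsilon$, giving the required equioscillation at three ordered points on $[a,b]$, and uniqueness is part \ref{th:Chebyshevi} of Theorem~\ref{th:Chebyshev}. The main step requiring care is verifying that $e \in (a,b)$ and that $\varepsilon \ge 0$ so that the alternating-sign condition is genuinely satisfied; both follow from the AM-GM-type inequality $a^2 < (a^2+ab+b^2)/3 < b^2$ and from $2((a^2+ab+b^2)/3)^{3/2} \ge a^2 b + a b^2$, which is the only nontrivial verification in the argument.
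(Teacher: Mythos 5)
Your proof is correct and follows essentially the same route as the paper: read off the alternance equations $p(a)=1-\varepsilon$, $p(e)=1+\varepsilon$, $p(b)=1-\varepsilon$ from Proposition~\ref{prop:properties}, use $p(a)=p(b)$ and $p^{\prime}(e)=0$ to identify $e$, and add the two equations $p(a)+p(e)=2$ to solve for the scale and $\varepsilon$. The closing verification that $e\in(a,b)$ and $\varepsilon\ge 0$ is a reasonable sanity check but is not strictly needed, since Proposition~\ref{prop:properties} already guarantees the alternance structure of $p_{2,a,b}$ and the derivation then pins it down uniquely.
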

\begin{proof}
see Appendix \ref{apx:explicit}.
\end{proof}

For the polynomials of higher degree, finding explicit formulas
seems to be unrealistic, as the problem reduces to finding roots of polynomials of degree more than 4. Also we were not able to construct any transcendental formula for $p_{n,a,b}$. However, we can use an adaptation of the well-known Remez algorithm (see, e.g.~\citep[Section 10]{Trefethen2020}) for finding optimal polynomials of higher degrees. We describe Remez algorithm in Appendix \ref{apx:remez}.

\subsection{Newton-Schulz iterations based on optimal odd polynomials} \label{sec:CANS}

We outline several reasonable choices of polynomials for Newton-Schulz iterations of a matrix $X$.

At first we consider the case when we are given a priori estimates on the singular values of $X$, i.e. $a \le \sigma_k(X) \le b$ for all $k = 1, \dots, n$. In this case it is natural to consider an integer $d_0 \in \mathbb N$ and an optimal odd polynomial $p_{d_0,a,b} = \sum_{k = 1}^{d_0} \alpha_{2k-1}x^{2k-1}$. All singular values of the matrix $X_1 = \sum_{k = 1}^{d_0} \alpha_{2k-1}X(X^TX)^{k-1}$ are contained in the interval $[a_1, b_1] = [1 - \varepsilon(d_0,a,b), 1 + \varepsilon(d_0,a,b)]$. Thus, we can again choose an integer $d_1$ (possibly distinct from $d_0$) and repeat this process with $p_{d_1, a_1, b_1}$ and matrix $X_1$. If $d_0, d_1,\dots$ are chosen to be greater or equal than $2$, then this process converges to the orthogonal factor $UV^T$ of $X$ in its polar decomposition (Algorithm \ref{alg:cans}).
We present analysis of the convergence of these iterations in case of polynomials of degree $3$ ($d_i= 2$).

Let $0 < a < b$ and consider the following recursion:
\begin{equation}
\label{eq:explicit3}
\begin{split}
&a_{0}=a,\quad  b_{0}=b,\quad 0<a<b \\
&a_{n+1}=1-\varepsilon\left(2, a_{n}, b_{n}\right), \\ &b_{n+1}=1+\varepsilon\left(2, a_{n}, b_{n}\right) .
\end{split}
\end{equation}
We also have $\varepsilon(d_k,a,b)=\|X_k-UV^T\|_2$.
\begin{proposition}\label{prop:convergence}
With the definition above, the error of approximation $\varepsilon_{n+1}=\varepsilon(2, a_n, b_n)$ converges to zero quadratically. More precisely,
\[
\varepsilon_{n+1} \leq \varepsilon_{n}^2 \quad \text{and} \quad \lim_{n \to \infty} \frac{\varepsilon_{n+1}}{\varepsilon_{n}^{2}}=\frac{3}{4}.
\]
\end{proposition}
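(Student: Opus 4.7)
The plan is to reduce the two-variable recursion to a single-variable one. For $n \ge 1$ the endpoints satisfy $a_n + b_n = 2$ and $a_n b_n = 1 - \varepsilon_n^2$, so the whole update is determined by $\varepsilon_n$ alone.

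First, I would substitute $a_n = 1-\varepsilon_n$, $b_n = 1+\varepsilon_n$ into the closed-form expression from Proposition~\ref{prop:explicit}. Using the identities
\[
a_n^2 + a_n b_n + b_n^2 = (a_n+b_n)^2 - a_n b_n = 3 + \varepsilon_n^2, \qquad a_n^2 b_n + a_n b_n^2 = (a_n+b_n) a_n b_n = 2(1-\varepsilon_n^2),
\]
the formula for $\varepsilon(2,a_n,b_n)$ collapses to
\[
\varepsilon_{n+1} = \frac{(1 + \varepsilon_n^2/3)^{3/2} - (1 - \varepsilon_n^2)}{(1 + \varepsilon_n^2/3)^{3/2} + (1 - \varepsilon_n^2)}.
\]

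Next I would show by induction that $\varepsilon_n \in (0,1)$ for every $n \ge 1$: the base case is immediate from Proposition~\ref{prop:explicit} since $a>0$, and the propagation follows from the bound $\varepsilon_{n+1} \le \varepsilon_n^2$. To obtain that bound, note that the denominator above is positive; cross-multiplying by it turns the inequality $\varepsilon_{n+1} \le \varepsilon_n^2$ into $(1+\varepsilon_n^2/3)^{3/2}(1-\varepsilon_n^2) \le (1-\varepsilon_n^2)(1+\varepsilon_n^2)$, and dividing through by $1 - \varepsilon_n^2 > 0$ reduces it to the scalar inequality $(1 + \varepsilon_n^2/3)^{3/2} \le 1 + \varepsilon_n^2$. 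Setting $u = \varepsilon_n^2$ and cubing yields $(1 + u/3)^3 \le (1+u)^2$, which expands to $u^3/27 \le 2u^2/3 + u$, trivially valid for $u \ge 0$.

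Finally, for the asymptotic constant I would Taylor-expand $(1 + \varepsilon_n^2/3)^{3/2} = 1 + \varepsilon_n^2/2 + \varepsilon_n^4/24 + O(\varepsilon_n^6)$. The numerator is then $\tfrac{3}{2}\varepsilon_n^2 + O(\varepsilon_n^4)$ while the denominator is $2 + O(\varepsilon_n^2)$, which gives $\varepsilon_{n+1}/\varepsilon_n^2 \to 3/4$ as $\varepsilon_n \to 0$. The only genuinely non-mechanical step is the one-variable reduction at the start; once that is in hand everything else is elementary algebra together with a short Taylor expansion.
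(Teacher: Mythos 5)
Your proof is correct and takes a cleaner, more direct route than the paper's. The paper first proves an auxiliary lemma (its Proposition~\ref{prop:convergence_ab}) about the sequence $b_n - a_n$, working with the quantity $ab = 1 - (b-a)^2/4$ and an auxiliary function $\gamma(a,b)$, and only afterwards converts via $b_n - a_n = 2\varepsilon_n$; the paper also leaves the inequality step as an unjustified claim that a certain expression is at most $1/2$. You skip the intermediate lemma entirely by substituting $a_n = 1 - \varepsilon_n$, $b_n = 1 + \varepsilon_n$ directly into the Proposition~\ref{prop:explicit} formula to get a one-variable recursion, obtain the limit $3/4$ by a short Taylor expansion of $(1+\varepsilon^2/3)^{3/2}$ at $0$, and reduce the inequality $\varepsilon_{n+1} \le \varepsilon_n^2$ to the explicit cubic inequality $(1+u/3)^3 \le (1+u)^2$, which is a genuine improvement in transparency over the paper.

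One small imprecision: that cubic inequality is not valid for all $u \ge 0$. Expanding gives $u + 2u^2/3 - u^3/27 \ge 0$, i.e.\ $u\bigl(1 + 2u/3 - u^2/27\bigr) \ge 0$, and the quadratic factor changes sign at $u = 9 + 6\sqrt{3} \approx 19.4$. So it is not ``trivially valid for $u \ge 0$'' but rather valid for $u \in [0,\, 9+6\sqrt3]$. Since your induction keeps $\varepsilon_n \in (0,1)$, hence $u = \varepsilon_n^2 < 1$, this is harmless, but the statement as written overreaches. You should also make the induction fully explicit as a joint induction (the hypothesis $\varepsilon_n \in (0,1)$ is used both to deduce $\varepsilon_{n+1} \le \varepsilon_n^2$ and to conclude $\varepsilon_{n+1} \in (0,1)$, noting that $\varepsilon_{n+1} > 0$ because $(1+\varepsilon_n^2/3)^{3/2} > 1 > 1 - \varepsilon_n^2$); as sketched, the phrase ``the propagation follows from the bound'' reads slightly circular even though the argument is sound.
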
 
\begin{proof}See Appendix \ref{apx:convergence_proof}. \end{proof}
\begin{corollary} \label{prop:num_steps} For the starting segment $[a_0, b_0]$, where $0<a_0<1$ and $b_0=1$, the number of iterations necessary to achieve the desired error of approximation $\varepsilon$ in the spectral norm is as follows:\[n\geq \left\lceil \log_2\left(\frac{\ln\varepsilon}{\ln(1-a_0)}\right) \right\rceil.\]\end{corollary}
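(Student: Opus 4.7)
The plan is to combine the quadratic contraction from Proposition~\ref{prop:convergence} with a monotonicity trick that handles the asymmetry of the initial interval $[a_0, 1]$. First I would record the obvious domain-monotonicity of uniform best odd approximation: whenever $[A, B] \subset [A', B']$, the restriction of the optimal polynomial on the larger interval remains a feasible candidate on the smaller one, so $\varepsilon(2, A, B) \le \varepsilon(2, A', B')$. Next I would enlarge $[a_0, 1]$ symmetrically around $1$: setting $\delta = 1 - a_0$, we have $[a_0, 1] \subset [1-\delta, 1+\delta]$, and monotonicity gives $\varepsilon_1 = \varepsilon(2, a_0, 1) \le \varepsilon(2, 1-\delta, 1+\delta)$. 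The right-hand side is exactly the symmetric quantity controlled by Proposition~\ref{prop:convergence}: the inequality $\varepsilon_{n+1} \le \varepsilon_n^2$ at a step where $[a_n, b_n] = [1-\delta, 1+\delta]$ yields $\varepsilon(2, 1-\delta, 1+\delta) \le \delta^2$. Consequently $\varepsilon_1 \le (1-a_0)^2$.

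Second, I would iterate the quadratic contraction. Because the recursion~\eqref{eq:explicit3} guarantees that $[a_n, b_n] = [1 - \varepsilon_n, 1 + \varepsilon_n]$ is symmetric for every $n \ge 1$, Proposition~\ref{prop:convergence} applies directly and, combined with the base case above, a straightforward induction gives $\varepsilon_n \le \varepsilon_1^{2^{n-1}} \le (1-a_0)^{2^n}$ for all $n \ge 1$. To make the error at most $\varepsilon$ it therefore suffices to enforce $(1-a_0)^{2^n} \le \varepsilon$. Taking natural logarithms (both sides are negative since $a_0 \in (0,1)$ and $\varepsilon \in (0,1)$) and dividing by $\ln(1-a_0) < 0$ flips the inequality to $2^n \ge \ln \varepsilon / \ln(1-a_0)$, which rearranges to $n \ge \log_2\!\bigl(\ln\varepsilon / \ln(1-a_0)\bigr)$; taking the ceiling (since $n$ is an integer) yields the claimed bound.

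The main obstacle is the base case. Proposition~\ref{prop:convergence}'s contraction is naturally phrased for intervals symmetric around $1$, since such intervals are precisely those produced by the recursion beginning at the first iteration; the initial interval $[a_0, 1]$ is asymmetric, so a direct application is not available. The enlargement-and-monotonicity device is what produces the clean $(1-a_0)^2$ bound on $\varepsilon_1$; any cruder base-case estimate (such as the trivial $\varepsilon_1 \le 1 - a_0$ coming from the identity polynomial $p(x) = x$) would introduce an extra additive shift in the exponent and spoil the exact form of the stated inequality.
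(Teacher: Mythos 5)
Your proposal is correct and its overall skeleton (a base-case estimate for the first iteration followed by iterating the quadratic contraction from Proposition~\ref{prop:convergence}) is the same as the paper's. The difference is in the base case. The paper evaluates $\varepsilon(2,a_0,1)$ directly from the explicit formula~\eqref{eq:appr_error} and bounds it by the cruder quantity $1-a_0$; combined with the contraction this gives an error of at most $(1-a_0)^{2^n}$, but with an index that the paper's own phrasing ties to the $(n+1)$-st iteration, leaving an off-by-one between proof and statement. Your enlargement-and-monotonicity device sidesteps the explicit formula entirely: the containment $[a_0,1]\subset[1-\delta,1+\delta]$ with $\delta=1-a_0$, the obvious monotonicity of $\varepsilon(2,\cdot,\cdot)$ under interval inclusion, and the symmetric contraction $\varepsilon(2,1-\delta,1+\delta)\le\delta^2$ give the tighter first-iteration bound $\varepsilon_1\le(1-a_0)^2$, which then reproduces $\varepsilon_n\le(1-a_0)^{2^n}$ with $n$ counting iterations from one, so the stated threshold $n\ge\bigl\lceil\log_2(\ln\varepsilon/\ln(1-a_0))\bigr\rceil$ drops out cleanly. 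In this sense your route is both more conceptual (it uses no explicit polynomial coefficients) and slightly sharper. One small point worth tightening: the inequality $\varepsilon(2,1-\delta,1+\delta)\le\delta^2$ is stated in Proposition~\ref{prop:convergence} only for intervals arising along a recursion, so you should add a sentence noting that every symmetric interval $[1-\delta,1+\delta]$, $\delta\in(0,1)$, does arise as $[a_1,b_1]$ for a suitable starting interval (or, equivalently, appeal to the pointwise estimate on $(b_{n+1}-a_{n+1})/(b_n-a_n)^2$ in the proof of Proposition~\ref{prop:convergence}, which is valid for any $\varepsilon_n\in(0,1)$).
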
\begin{proof}See Appendix \ref{apx:num_steps_proof}.\end{proof}

\subsection{Normalization of a matrix prior to Newton-Schulz iterations}
\label{sec:gelfand}

To achieve the desired behavior of Newton-Schulz iterations (both classical Newton-Schulz and our modifications), one has to impose upper estimates for singular values of a matrix. That is, the first step of any algorithm based on Newton-Schulz is to normalize the matrix so that its singular values fall into the convergence range of polynomials (e.g. $(0, \sqrt{3})$ for classical NS,
[$\varepsilon$, 1] in our case). The easiest approach is to normalize by Frobenius norm, but this may significantly decrease small singular values and slow down the convergence. Ideally, the matrix should be normalized by its largest singular value. To estimate $\sigma_1$ efficiently, one may use power method (but it estimates $\sigma_1$ from below), randomized estimates \citep{dixon1983estimating}, \citep[Lemma~4.1]{halko2011finding} or Gelfand's formula:
$\sigma_1(A)\leq \|(A^TA)^k\|^{1/(2k)}_F.$ 
If needed, the Gelfand's formula can be implemented without introducing extra matmuls because $(A^TA)^k$ is computed during Newton-Schulz iterations:
\begin{itemize}
\item[1.] Compute matrices $(A^TA)^i$ for $i=1...k$ and save them. 
\item[2.] Compute $c=\|(A^TA)^k\|_F^{1/(2k)}$. 
\item[3.] Compute $p_1(A/c)=(\sum_i^k \alpha_i (A^TA)^i/c^{2i}) (A/c)$. Use $p_1(A/c)$ for the next iteration.
\end{itemize}
Note that for third-degree polynomials, we do not need to save extra matrices.

\section{Polynomials with large derivatives at $x=0$} \label{sec:maxderiv}

\begin{figure}
  \centering
   \includegraphics[width=1.0\linewidth]{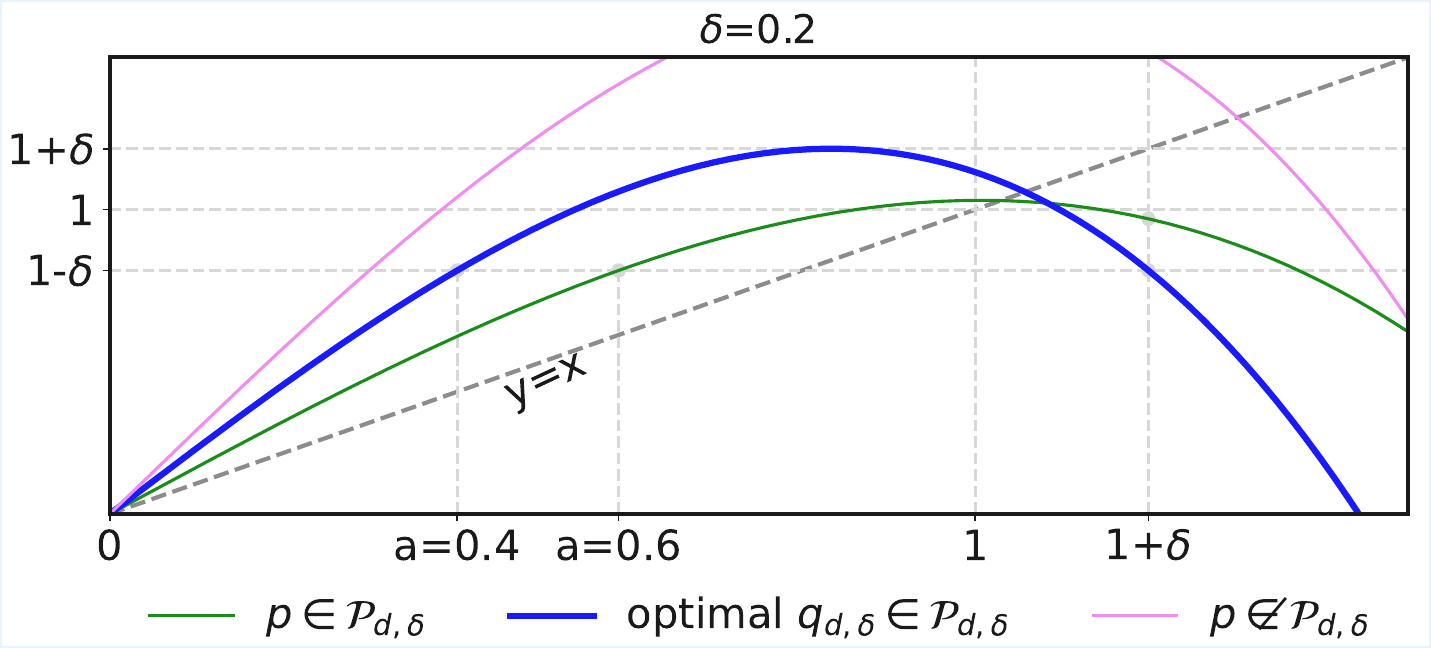} 
  \caption{\label{fig:high_derivative} Illustration of the selection of a degree-3 ($d=2$) polynomial with a large derivative at zero. The green polynomial falls into $[1-\delta, 1+\delta]$, but has insufficient derivative. The blue polynomial $q_{d, \delta}$ has the highest possible derivative among polynomials from $\mathcal{P}_{d, \delta}$. The purple polynomial is not part of  $\mathcal{P}_{d, \delta}$, and its derivative is too large. }
\end{figure}

 Now we aim to construct polynomials that can rapidly uplift the smallest singular values, while deviating from 1 by no more than given $\delta$. It implies that they should have a large derivative at zero.

At first let us discuss the conditions that we impose on polynomials. Since it is desirable that the value $p(p(\dots p(x)\dots))$ falls into the interval $[1-\delta, 1+\delta]$ after sufficient number of iterations, it is natural to require that $p([1-\delta, 1+\delta]) \subset [1 - \delta, 1+\delta]$. On the other hand, for $x \in [0, 1-\delta]$ we want to guarantee, that
$x$ is not moved further away from the desired interval.
Hence, for $x \in [0, 1-\delta]$ we require the condition $p(x) \ge x$. On the other hand, we do not impose any conditions on the behaviour of $p$ for $x > 1 + \delta$, thus we also need to add the restriction $p(x) \le 1 + \delta$ for $x \in [0, 1-\delta]$  (otherwise, we can not control the behaviour with respect to iterations of $p$). With the above considerations we introduce the set
\[\begin{aligned}
   \mathcal P_{d, \delta} = \{& p \in L_d: x \le p(x) \le 1+\delta \;\;\forall x \in [0, 1-\delta],\;\; \\& 1-\delta \le p(x) \le 1+\delta \;\;\forall x \in [1-\delta, 1+\delta]\}.
    \end{aligned}
\]
The problem posed at the beginning of the section can be now formulated as an optimization problem 
\begin{equation}\label{eq:1stOptProblem}
 \max_{p \in \mathcal P_{d, \delta}} p'(0).
\end{equation} We shall not solve this problem directly, but instead we replace it by another one, the solution of which can be reduced to the problem of finding best polynomial approximation of the unity function.

Consider for a polynomial $p \in \mathcal P_{d, \delta}$ the number $\mathfrak a_\delta(p) = \sup\{x \in [0, 1-\delta]: p(x) < 1 - \delta\}$. That is, $\mathfrak a_\delta (p)$ is the left boundary of the biggest segment $[a, 1 + \delta]$ on which the values of a polynomial $p$ falls into $[1-\delta, 1+\delta]$. Intuitively, to increase the derivative of a polynomial $p$ at zero, we need to shift the left boundary $a$ of the described segment as close to zero as possible until there does not exist a polynomial that fits into the restrictions of $\mathcal P_{d, \delta}$ (see the shift from the green to the blue polynomial in Figure \ref{fig:high_derivative}). Thus, we consider the optimization problem \begin{equation}\label{eq:2ndOptProblem}
  \min_{p \in \mathcal P_{d, \delta}} \mathfrak a_\delta(p).
\end{equation}
Below we show that the problem~(\ref{eq:2ndOptProblem}) has a unique solution that can be found explicitly for polynomials of degree $3$ and by 
binary search for higher degrees (Algorithm \ref{alg:preprocessing}). Moreover, we show the equivalence of problems~(\ref{eq:1stOptProblem}) and~(\ref{eq:2ndOptProblem}) (i.e. optimal polynomials for these problems coincide) if $\delta$ is large enough.

\begin{proposition}\label{prop:kellerjordantype}
    Let $\delta \in (0,1)$ and $d \in \mathbb N, d\geq 2$. Then the following statements hold.
    \begin{enumerate}[label=(\roman*)]
        \item\label{prop:kellerjordantype_adef} There is a unique number $a = a(d, \delta) \in (0, 1-\delta)$ such that $\varepsilon(d, a, 1+\delta) = \delta$.
        \item\label{prop:kellerjordantype_unique} The solution to the optimization problem~(\ref{eq:2ndOptProblem}) is unique, the minimum is equal to $a = a(d, \delta)$ from~\ref{prop:kellerjordantype_adef} and is attained on the polynomial $q_{d, \delta} = p_{d, a, 1 + \delta}$ (optimal odd polynomial on $[a, 1+\delta]$ of degree $2d-1$, see Section \ref{sec:optimalodd}). 

        \item\label{prop:kellerjordantype_convexity} The solution $q_{d, \delta}$ to the problem~\eqref{eq:2ndOptProblem} satisfies the inequality $q_{d,\delta}(x) \ge cx$ for all $x \in [0, a(d, \delta)]$ with $c = (1-\delta)/a(d,\delta) > 1$.
        
        \item\label{prop:kellerjordantype_1stOptProblem} 
        Let $x_0 = a(d, \delta) < x_1 < \dots < x_{d} = 1 + \delta$ denote the alternance points for the polynomial $q_{d, \delta}$. If $x_2 \ge 1 - \delta$, then $q_{d, \delta}$ is the solution to the problem in~(\ref{eq:1stOptProblem}), i.e. it maximizes the derivative at zero on the set $\mathcal P_{d, \delta}$.
    \end{enumerate}
\end{proposition}
\begin{proof}
   See Appendix \ref{apx:kellerjordantype}.
\end{proof}

Using a sequence of different polynomials, rather than iterating a single one, can push singular values into the target interval $[1-\delta, 1 + \delta]$ more effectively and produce a faster-growing derivative at zero. The composition of polynomials can be constructed as follows:
\begin{itemize}
\item[1.] Start with the target $\delta \in (0,1)$. 
\item[2.] Choose a degree $d_1 \in \mathbb N$ and find a larger interval $[1-\delta_1, 1+\delta_1]$ that a polynomial $p_1$ can map into $[1-\delta_1,1+\delta_1]$ (in other words, $\varepsilon(d_1, 1-\delta_1,1+\delta_1) = \delta$). 
\item[3.] Repeat this, choosing yet another $d_2 \in \mathbb N$ and polynomial $p_2$ to map an even larger interval $[1-\delta_2, 1+\delta_2]$ into the previous $[1-\delta_1, 1+\delta_1]$. 
Repeat this process $l$ times. 
\end{itemize} 
It is easy to see that the composition $f(x) = p_1(p_2(\dots p_l(x) \dots))$ maps the interval $[1 - \delta_l, 1 + \delta_l]$ into $[1 - \delta,1 + \delta]$. Moreover, $f$ monotonically increases on $[0, 1-\delta_l]$ and satisfies $f(x) > x$ for all $x \in [0, 1 - \delta_l]$. After rescaling the argument by multiplying with $(1 + \delta)/(1 + \delta_l)$ we obtain a function $g(x) = f\left(x(1 + \delta_l)/(1 + \delta)\right)$ that has similar properties to iteration of $q_{d, \delta}$ but with a crucial advantage: its derivative at zero is higher. For example, if $d_i= d$, then $g^\prime(0) \ge \left(q^\prime_{d, \delta}(0)\right)^l$.

Polynomials with high derivatives at zero can be applied to matrices with rapidly decreasing singular values before orthogonalization (Algorithms \ref{alg:cans}, \ref{alg:preprocessing}). This helps to speed up orthogonalization (see Figure \ref{fig:random}). The number of iterations $\ell$ can be chosen either in advance, based on the desired budget of matmuls (the muon case), or until convergence to the desired accuracy $\varepsilon$ (the orthogonalization case).

\begin{figure*}[h!]
\begin{minipage}[t]{0.47\textwidth}

\begin{algorithm}[H]
\begin{algorithmic}
\caption{\label{alg:cans} Orthogonalization with CANS.}
\STATE \textbf{Input} Normalized matrix $X \in \mathbb{R}^{n\times p}, p\leq n$; $[a, b]$ where singular values of $X$ lie; number of iterations $\ell$; polynomials' degrees $2d_i-1$.
\STATE
\IF {a is unknown}
\STATE $\begin{aligned}&X, a, b =\texttt{$\delta$-orthogonalization}(X)\end{aligned}$
\ENDIF
\FOR {i \textbf{in} $\,\, 0 \dots \ell$}
\IF {$d_i=2$}
\STATE $p_i, \varepsilon$ are found using Proposition \ref{prop:explicit}
\ELSE
\STATE $p_i, \varepsilon = \texttt{remez}(a, b, 2d_i-1)$
\ENDIF
\STATE $a, b = 1-\varepsilon, 1+\varepsilon$
\ENDFOR
\STATE $X=p_s(p_{s-1}(\dots p_1(p_0(X))))$
\STATE \textbf{Return} $X$
\end{algorithmic}
\end{algorithm}

\end{minipage}
\hfill
\begin{minipage}[t]{0.47\textwidth}  

\begin{algorithm}[H]
\begin{algorithmic}
\caption{\label{alg:preprocessing} $\delta$-orthogonalization.}
\STATE \textbf{Input} Normalized $X \in \mathbb{R}^{n\times p}, p\leq n$; right boundary $B$; degrees $2d_i-1, i=0\dots \ell$; desired $\delta$; $eps=1e\text{-}7$.
\STATE  $A_l, A_r = 0, 1-\delta$
\WHILE { $|\delta-\varepsilon|>eps$} 
\STATE $a, b = (A_l + A_r) / 2, B$
\FOR {i \textbf{in} $\,\, 0 \dots s$}
\STATE $p, \varepsilon=\texttt{remez}(a, b, 2d_i-1)$
\STATE $a, b = 1 - \varepsilon, 1 + \varepsilon$
\ENDFOR
\IF {$\varepsilon < \delta$}
\STATE $A_r = (A_r + A_l) / 2$
\ELSE
\STATE $A_l = (A_l + A_r) / 2$
\ENDIF
\ENDWHILE
\STATE $X=p(X)$
\STATE \textbf{Return} $X, 1-\delta, 1+\delta$
\end{algorithmic}
\end{algorithm}

\end{minipage}
\end{figure*}

\section{Applications}  \label{sec:appli}
Code is available in our repository\footnote{\url{https://github.com/GrishKate/accelerating_orthogonalization.git}}.

\subsection{Orthogonalization}\label{sec:ortho}
Let us consider the problem of computing the orthogonal polar factor of a matrix $A$. We compare the performance of the classical Newton-Schulz
(\eqref{eq:classical_ns}) to the CANS method (Algorithm \ref{alg:cans}). 
To find the composition of 3-rd order polynomials, we use explicit formulas from Proposition \ref{prop:explicit}, for the  5-th order polynomials -- the Remez algorithm. The Figure \ref{fig:random} shows the convergence of these algorithms for a matrix $A \in \mathbb{R}^{1000\times 1000}$ with entries sampled from $\mathcal{N}(0, 1)$. 

We conclude that the iterations with tuned coefficients converge noticeably faster than the classical Newton-Schulz (matmuls are proportional to time, see Table \ref{table:time_figure}). CANS algorithm performs better when the boundaries of the spectrum are determined more accurately. Overestimating the smallest singular value results in faster convergence than underestimating it.  $\delta$-orthogonalization helps to accelerate the convergence of the algorithm, even if the smallest singular value is  not available.

\subsection{Muon optimizer \label{sec:muon}}
Muon \citep{jordan2024muon} is a recently developed optimizer for matrix-shaped parameters in neural networks, that has shown promising results in improving convergence and training speed \citep{liu2025muonscalablellmtraining}. The key idea of Muon is the orthogonalization of the momentum $M_k$:
$$M_k=\beta M_{k-1} +(1-\beta)G_k,$$
$$W_k=W_{k-1}-\eta Ortho(M_k),$$
where $G_k$ is the gradient on the step $k$, $M_k$ is the momentum, $W$ are the parameters that we wish to update, $\eta$ is the learning rate, 
$Ortho(M_k)=\argmin_{O} \{\|M_k-O\|_F:  O^TO=I \, \text{or} \, OO^T=I\}$ (which is known as Procrustes problem with exact solution being polar factor $O=UV^T$ of $M_k=USV^T$). However, due to the prohibitive cost of SVD, authors instead choose to apply Newton-Schulz iteration with tuned coefficients for approximate orthogonalization. Authors empirically find, that in practice the  singular values of the resulting matrix  may deviate from 1 without harming the performance of optimizer for small models (for original Muon polynomial \citep{jordan2024muon} the singular values fall into $[0.7, 1.2]$). However, further investigation suggested that decreased deviation improves the performance for larger models, e.g. GPT-2 medium \citep{cesista2025muonoptcoeffs}. 
In addition, higher derivative of composition of polynomials at zero $\phi(0)'$ noticeably improves the performance. Thus, the objective is to find composition $\phi(x)=p_n(p_{n-1}(\dots p_1(x))) \in [1-\delta, 1+\delta]$ with the largest derivative:
\[\begin{split}&\max_{\phi(x) \in [1-\delta, 1+\delta]}\phi(0)'.
\end{split}\]
Prior works \citep{cesista2025muonoptcoeffs, jiacheng}
have attempted to construct such polynomials using computational search. However, our theory allows to find optimal polynomials with these constraints.

\begin{figure*}
  \centering
   \includegraphics[width=0.8\linewidth]{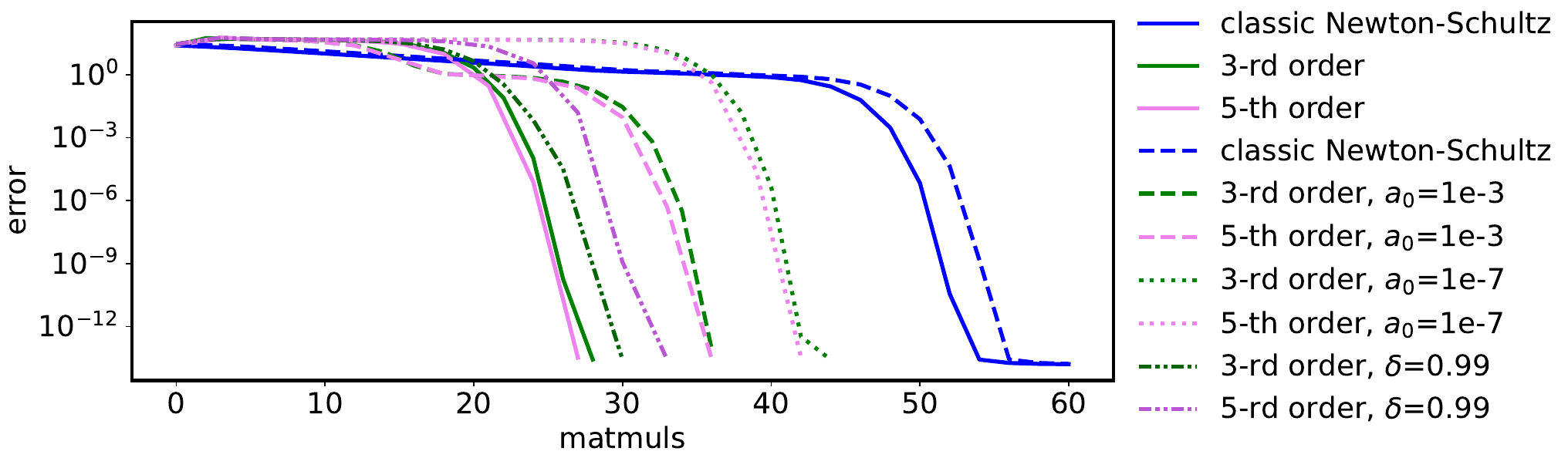} 
  \caption{\label{fig:random} Convergence of iterative algorithms for matrix orthogonalization. The solid lines show the performance when the exact values of $\sigma_1(A), \sigma_n(A)$ are known, and the matrix is normalized by $\sigma_1(A)$. In other cases, the matrix is normalized by $\|(A^TA)^2\|^{1/4}_F$ and the precise value of the left boundary is $\sigma_n(A)/\|(A^TA)^2\|^{1/4}_F=9e{-}5$. The striped lines show performance for overestimated boundary $a_0=1e{-}3$, the dotted lines -- for underestimated $a_0=1e{-}7$. The dashdotted lines show convergence of algorithm with 4 iterations of $\delta$-orthogonalization (Algorithm \ref{alg:preprocessing}).}
\end{figure*}

\begin{figure*}[tbh]
  \centering
   \includegraphics[width=0.7\linewidth]{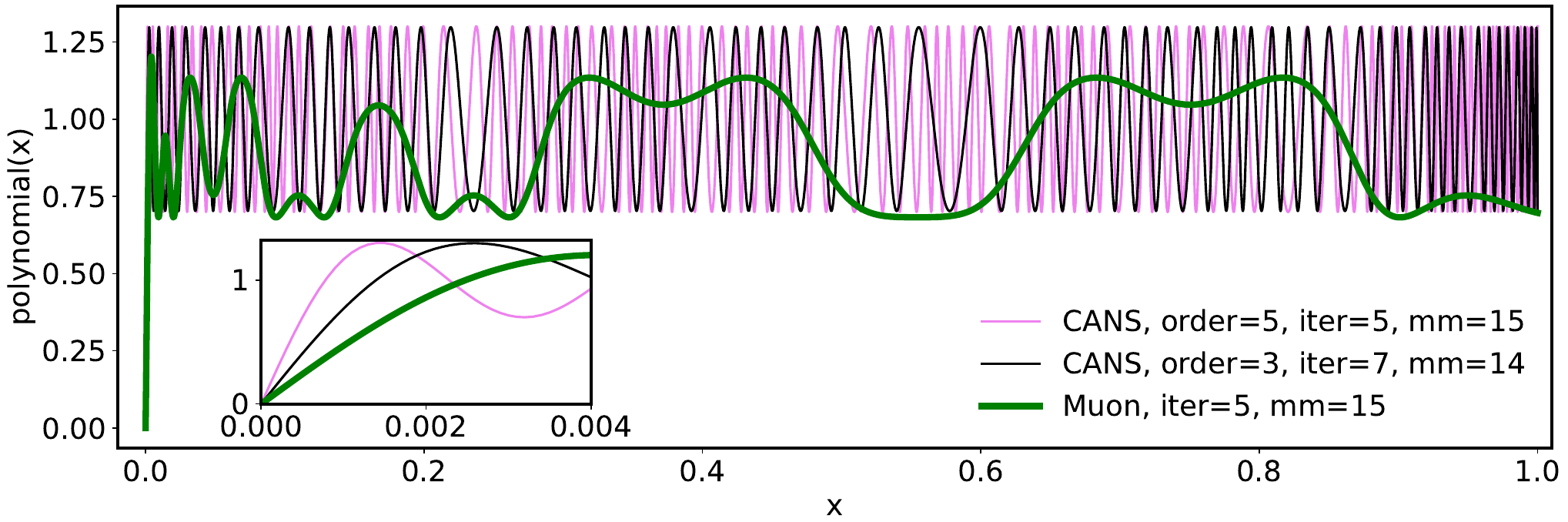} 
  \caption{\label{fig:polynoms} Comparison of CANS with the original Muon polynomial. Zoomed plot shows behavior near zero. ``iter'' denotes number of polynomials in composition, ``mm'' - total number of matmuls.}
\end{figure*}

\begin{figure*}[tbh]
\centering\includegraphics[width=0.7\linewidth]{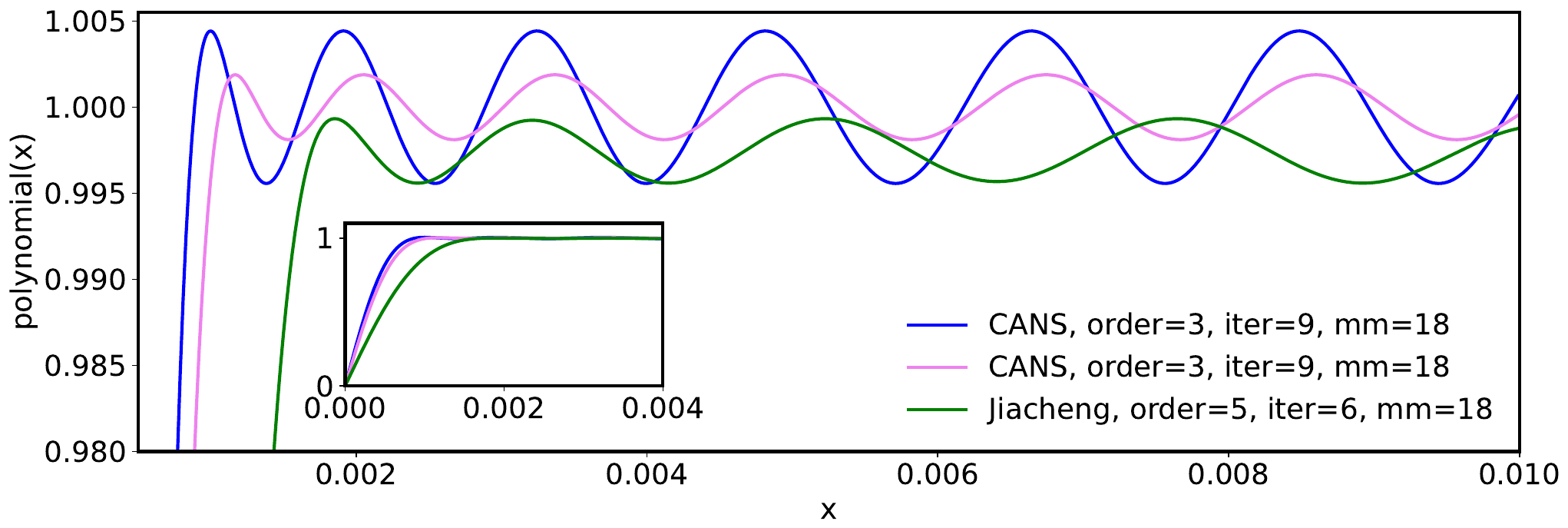} \caption{\label{fig:accurate_polynoms}Comparison of CANS polynomials with \citep{jiacheng}.
}\end{figure*}

We have set the deviation $\delta=0.3$ and generated a composition of 5 polynomials of 5-th degree (purple) and 7 polynomials of 3-rd degree (black), which are shown in Figure \ref{fig:polynoms}. Both polynomials have higher derivative at zero than original Muon polynomial $p(x)=3.4445x-4.7750x^3+2.0315x^5$, while requiring no more matmuls. Compositions of 9 3-rd order polynomials for $\delta=0.00188$ (purple) and $\delta=0.00443$ (blue) (Figure \ref{fig:accurate_polynoms}) also have  higher derivatives than \citep{jiacheng} polynomial found by computational search. Polynomials' coefficients are shown in Appendix \ref{apx:coefficients}.

The performance of  Muon optimizer with proposed polynomials is tested on the task of training NanoGPT  \citep{modded_nanogpt_2024}  (see Appendix \ref{apx:hyperparameters} for details). The convergence of Muon with different polynomials is shown in the Figure \ref{fig:test_loss}. We observe, that CANS polynomial requiring 12 matmuls (purple) outperforms Muon polynomial with the same number of matmuls (4 iterations, cyan).
The difference in convergence may be more pronounced when training larger models.

\begin{figure*}[tbh]
  \centering
   \includegraphics[width=0.7\linewidth]{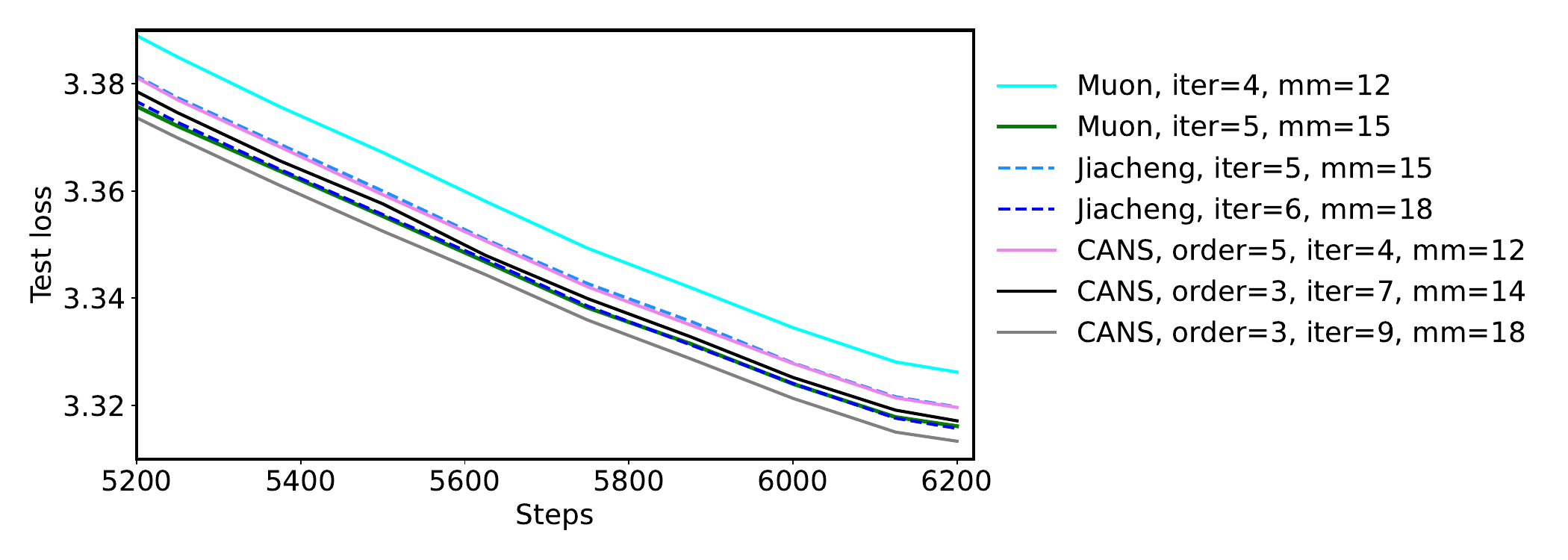} 
  \caption{\label{fig:test_loss}Test loss of NanoGPT trained using Muon optimizer with different polynomials.}
\end{figure*}

\subsection{Riemannian optimization on the Stiefel manifold \label{sec:riemannian}}

Let us introduce the following definitions, based on~\citep{absil2009optimization, li2020efficient}.

\begin{definition}
A Riemannian manifold $(\mathcal{M}, \rho)$ is a smooth manifold whose tangent spaces $T_x(\mathcal{M})$ are endowed with a smoothly varying
inner product $\rho_x(\cdot, \cdot): T_x(\mathcal{M}) \times T_x(\mathcal{M}) \to \mathbb{R}$, which is called the Riemannian metric.
\end{definition}

\begin{definition}
A geodesic is a curve representing the locally shortest path between two points on manifold. An exponential map $Exp_x\colon T_x(\mathcal{M}) \to \mathcal{M}$ maps a tangent vector to the manifold. $Exp_x(tv)$ represents a geodesic $\gamma(t), t\in [0, 1]$, s.t. $\gamma(0)=x, \dot\gamma(0)=v$.  A retraction is a smooth mapping from the tangent bundle to the manifold $\text{Retr}_x\colon T_x(\mathcal{M}) \to \mathcal{M}$ iff $\text{Retr}_x(0)=x$ and $D\text{Retr}_x(0)=\text{id}_{T_x(\mathcal{M})}$, where $D$ denotes derivative. Usually, retraction is a computationally efficient alternative to exponential mapping.
\end{definition}

The Stiefel manifold is a Riemannian manifold, consisting of $n\times p, n\geq p$ matrices with orthonormal columns $\mathcal{M}=St(n, p)=\{X\in \mathbb{R}^{n\times p}: X^TX=I\}$. The tangent space of $\mathcal{M}$ is defined as:
\[T_X(\mathcal{M})=\{Z: Z^TX+X^TZ=0\}.\]
The projection on $\mathcal{M}$ can be written as:
\begin{equation}\label{eq:projection}\pi_{X}(Z)=Z-\frac{1}{2}X(Z^TX+X^TZ)=WX,\end{equation}
\begin{equation}\label{eq:matrix_W} W=\hat{W}-\hat{W}^T, \quad \hat{W}=ZX^T-\frac{1}{2}X(X^TZX^T).\end{equation}
The process of Riemannian optimization of the function $f$ on the manifold $\mathcal{M}$ can be split into three steps. At first, the gradient $\nabla f$ in the Euclidean space is projected onto tangent space $T_{X_k}(\mathcal{M})$ to obtain $\nabla_{\mathcal{M}}f(X_k)=\pi_{X_k}(\nabla f)$. Secondly, momentum $M_k$ is transported to $T_X(\mathcal{M})$ and combined linearly  with $\nabla_{\mathcal{M}}f(X_k)$ to get the updated momentum $M_{k+1}$.  Finally, $X_{k+1}$ is computed as a step along the curve on the manifold with initial direction $M_{k+1}$. Parameters can be updated using the exponential map and parallel transport of momentum, but due to the computational complexity of these methods, retraction and vector transport are often used instead. 

Let $\xi_X, \eta_X \in T_X(\mathcal{M})$ be tangent vectors. The vector transport of $\xi_X$ along retraction map $\text{Retr}_X(\eta_X)$ can be computed as $\tau_{\eta_X}\xi_X=\pi_{\text{Retr}_X(\eta_X)}(\xi_X)$. The projection is a linear mapping, so the first two steps can be combined $M_{k+1}=\alpha \pi_{X_k}(\nabla f(X_k)) +\beta \tau_{M_{k}}(M_{k})=\pi_{X_k}(\alpha \nabla f(X_k)+\beta M_k)$.

There are several retractions of vector $\xi$ in point $X$, that can be used in practice \citep{absil2009optimization}.
\textbf{QR decomposition:} $\text{Retr}_X(\xi) = qr(X+\xi),$ where $qr(A)$ is the $Q$ factor from QR decomposition.
\textbf{Cayley transform:}
$\text{Retr}_X(\xi) = (I-\frac{1}{2}W(\xi))^{-1}(I+\frac{1}{2}W(\xi))X,$
with $W(Z)$ denoted in \ref{eq:matrix_W}. \citep{li2020efficient} approximates closed-form Cayley transform using iterative algorithm. \textbf{Polar decomposition:}
$\text{Retr}_X(\xi) = UV^T=(X+\xi)(I+\xi^T\xi)^{-1/2},$ where $USV^T=X+\xi$ is SVD decomposition. Note that this retraction is known to be of the second order~\citep{absil2009optimization, gawlik2018high}.

In this work, we propose to approximate the polar retraction using Newton-Schulz iteration with carefully chosen polynomials.
The step of the Riemannian gradient descent can be written
as
\begin{equation}\label{eq:grad_step} X_{k+1}=\text{Retr}_{X_k}(\alpha \pi_X(\xi)).\end{equation}
To find the interval for estimation of the polynomial's coefficients, we should estimate the condition number $\sigma_p(A)/\sigma_1(A)$ of the matrix $A = X_k+\alpha \pi_X(\xi)$.
Let us compute the Gram matrix:
\[\begin{aligned} A^TA&=(X+W(\xi)X)^T(X+W(\xi)X)=\\&=I+X^TW(\xi)^TW(\xi)X.\end{aligned}\] 
Therefore, $\sigma_p(A)=\sqrt{\sigma_p(A^TA)}\geq 1$. Since $A$ has size $n\times p, p\leq n$ and $p$ nonzero singular values, it follows that $\sigma_1(A)\leq \sqrt{\|A\|_F^2-(p-1)}=c$, which is a highly accurate estimate in this setting. Thus, we can normalize $A$ by $c$, set $[a, b]=[1/c, 1]$ and perform CANS orthogonalization.

\subsection{Experiments \label{sec:riemannian_exp}}
Following the work \citep{li2020efficient}, we benchmark the performance of Riemannian optimization methods on the task of training CNN with orthogonal constraints. We train Wide ResNet \citep{zagoruyko2016wide} on classification of CIFAR-10. The convolutional kernels $K \in \mathbb{R}^{c_{out}\times c_{in} \times k \times h}$ are reshaped into $p\times n=c_{out} \times (c_{in} \cdot k \cdot h)$ matrices, which are restricted to Stiefel manifold. We optimize these parameters using Riemannian SGD with momentum and Riemannian ADAM, using vector transport and proposed polar retraction (see Appendix \ref{apx:algorithms}, \ref{apx:hyperparameters}). Other parameters are optimized with standard SGD or ADAM. 

Tables \ref{table:wrn_training}, \ref{table:wrn_training_adam} show that our method has the lowest per epoch training time among other retractions, while achieving the same accuracy. It has a simple explanation. To form the matrix $W \in \mathbb{R}^{n\times n}$ for Cayley retraction as in \citep{li2020efficient}, 3 matmuls are needed (see \ref{eq:matrix_W}) and multiplying by $W$ has asymptotics $\mathcal{O}(n^2p)$. Cayley retraction can also be done using the Woodbury formula with asymptotics $\mathcal{O}(np^2)$, but more matmuls (see Appendix \ref{apx:algorithms}). In contrast, forming $\pi_X(\xi)$ using formula \ref{eq:projection} requires 2 matmuls; multiplications with $n\times p, p\leq n$ matrix $A$ in CANS have asymptotics $\mathcal{O}(np^2)$.

\begin{table}[tbh]
\caption{\label{table:wrn_training_adam} Accuracy and training time for Wide ResNet-16-10 on CIFAR-10 using Adam.}
\centering
\begin{tabular}{ccc}
\toprule
 Retraction & Accuracy & Time per epoch (s) \\
\midrule
 - & 94.68 & \textbf{35.0}\\
 Cayley \citep{li2020efficient} & 95.77 & 71.2 \\
 Cayley (Woodbury) &  95.69 &  70.9 \\
 QR & 95.57 & 61.7 \\
 CANS & \textbf{95.82} & \underline{45.1}\\
\bottomrule
\end{tabular}
\end{table}

\section{Conclusion}
This work presented efficient algorithms for deriving the theoretically optimal coefficients for Newton-Schulz iteration. 
The practical effectiveness of CANS was demonstrated in accelerating the computation of the unitary polar factor, orthogonalization in the Muon optimizer, and fast retraction on the Stiefel manifold. We believe that our method can be useful for other applications as well, as it provides a general-purpose framework for finding optimized polynomials with desired accuracy.

\section{Reproducibility Statement}
The experimental details are described in Sections \ref{sec:muon}, \ref{sec:riemannian_exp} and Appendix \ref{apx:hyperparameters}. The coefficients of polynomials are presented in Appendix \ref{apx:coefficients}.

\section*{Impact Statement}

This paper presents work whose goal is to advance the field of Machine
Learning. There are many potential societal consequences of our work, none
which we feel must be specifically highlighted here.

\bibliography{refs}
\bibliographystyle{icml2026}

\newpage
\appendix
\onecolumn

\section{Proof of Theorem \ref{th:Chebyshev} for odd polynomials \label{apx:proof_thm_1}}

In a nutshell, the result follows from a generalized version of the Chebyshev equioscillation theorem from \citep{Hormander2018}. It applies to function spaces where an element is guaranteed to be zero if it vanishes at sufficiently many distinct points – a condition that holds in our case of odd polynomials.

The generalized version of the Chebyshev equioscillation theorem \citep[Theorems~4-5]{Hormander2018} states the following. Consider a compact metric space $X$ and an $n$-dimensional  vector space $L \subset C(X)$. Assume that any $f \in L$ that has n distinct zeroes in $X$ is identically equal to zero. Then the following statements hold.
\begin{itemize}
\item[1.] For all $g \in C(X)$ there is a unique best approximation to $g$ in the space $L$, i.e. a function $G \in L$ such that $\|g - G\|_{C(X)} = \min_{f \in C(X)} \|g-f\|_{C(X)}$. 
\item[2.] Moreover, $G\in L$ is the best approximation to $g$ if and only if there exists a set $E \subset X $that consists exactly of $n+1$ points such that $\|g-G\|_{C(X)} = \|g - G\|_{C(E)} = \min_{f \in C(X)} \|g-f\|_{C(E)}$.
\end{itemize}

Applying (1): At first we show that this theorem is applicable to $L = L_n$ and $X = [a,b]$, where $0 < a < b$ and $L_n$ is the space of odd polynomials of degree $\le 2n-1$. Indeed assume that $f \in L_n$ has $n$ distinct zeroes in $[a,b]$. Then it also has $n$ distinct zeroes in $[-b,-a]$, so f has at least $2n$ distinct zeroes. As $\deg f < 2n$ we conclude that $f=0$. Thus, the generalized Chebyshev equioscillation theorem implies that the best odd polynomial approximation $G \in L_n$ to $g \in C[a,b]$ is unique and that there exists $E \subset [a,b]$ that consists of exactly $n+1$ point such that $G$ is the best approximation to $g$ in the sense of the norm $\|\cdot\|_{C(E)}$.

Applying (2): Now let $E=\{x_0,...,x_n\}$, where $a \le x_0 < \dots < x_n \le b$. It remains to describe the best approximation to $g$ on the set $E$. We claim that if $G \in L_n$ and $\varepsilon$ satisfy $(-1)^j \varepsilon = G(x_j) - g(x_j)$ for all $j$, then $G$ is the best approximation of $g$ on $E$ with error $|\varepsilon|$. Indeed, if $F \in L_n$ approximates $g$ with error $\le |\varepsilon|$ on $E$, then $F-G$ has at least $n$ zeroes on $[a,b]$ counting multiplicity (because the sign of the difference $F(x) - G(x)$ is alternating on the points $x_0, \dots, x_n$). As above this implies that $F-G=0$ since this is an odd polynomial with $n$ positive roots and $\deg(F-G)<2n$. On the other hand, the conditions on $G$ and $\varepsilon$ above can be considered as a square linear system of equations (on coefficients of $G$ and $\varepsilon$). It is easy to verify that the matrix of this system of linear equations is nonsingular, so such $G$ and $\varepsilon$ exist. Thus, the best approximation $G\in L_n$ to $g$ on $E$ is unique and is characterized by the fact that $G-g$ equioscillates on $E$. Thus, Theorem~\ref{th:Chebyshev} is proved.

\section{Proof of Proposition \ref{prop:properties} \label{apx:chebyshev_properties}}
\begin{proof}
    To simplify the notation we denote $p_{n,a,b}$ simply by $p$ throughout this proof. 
    
    (i) At first we note that the polynomial $p^{\prime}$ is not identically zero and vanishes at the points $x_1, \dots, x_{n-1}$, as these points are extrema of the function $p-1$ and lie in the interior of the interval $[a,b]$. Clearly, $p^{\prime}$ is even, so it also vanishes at $-x_1, \dots, -x_{n-1}$. As $\deg p^{\prime} \le 2n-2$, $p^\prime$ cannot have any other roots, and, in particular, $p^\prime(x_0) \ne 0$ and $p^{\prime}(x_n) \ne 0$. Therefore, $x_0$ and $x_n$ belong to the boundary of $[a,b]$, so the statement~\ref{prop:propertiesi} is proved. 
    
    (ii) In order to prove~\ref{prop:propertiesii} it suffices to verify that \[p(a) = 1-\varepsilon,\] as the values $p(x_j)$ are uniquely determined by the value $p(x_0)$ due to Theorem~\ref{th:Chebyshev}~\ref{th:Chebyshevii}. Assume the contrary, i.e. that \[p(a) = 1 + \varepsilon.\] Let $r$ denote a point on the interval $[0,a]$, where $p$ attains its maximum value. If $r$ is an interior point of $[0,a]$, then, clearly, $p^\prime(r) = 0$. In the case $r = a$, we again conclude that $p^\prime(r) = 0$, as \[p(x) \le 1 + \varepsilon = p(a)\] for $x \in [a,b]$. In either case $r$ is a root of the polynomial $p^\prime$ distinct from $x_1, \dots, x_{n-1}$, so we have arrived at a contradiction with the fact that $\deg p^\prime = 2n-2$. 
    
    (iii)
    It is easy to see that $p^{\prime\prime}$ has a root $s_j$ on each open interval $(x_j, x_{j+1})$, $j = 1, \dots, n-2$. Since $p^{\prime\prime}$ is odd it also has the roots $0, -s_1, \dots, -s_{n-2}$. Clearly, since $\deg p^{\prime\prime} \le 2n-3$, it does not have any other roots. Therefore, $p^\prime$ is monotone on the interval $[0,x_1]$. If it increases on this interval, then it is negative there, as $p^\prime(x_1) = 0$. So, in this case, $p$ decreases on the interval $[0, x_1]$, which contradicts the fact $p(a) > 0$. Thus, $p^\prime$ decreases on the interval $[0, x_1]$. Moreover, it has a maximum at $x = 0$, for it is an even polynomial. Finally, there exists a point $x \in (0,a)$ such that \[p^\prime(x) = (1 - \varepsilon)/a,\] since $p(0) = 0$ and $p(a) = 1 - \varepsilon$. Due to monotonicity of $p^\prime$ we conclude \[p^\prime(0) \ge (1 - \varepsilon)/a.\] 
    
    (iv)
    Consider $t > 0$ and let $q(x) = p_{n,ta,tb}(tx)$. Also consider the points $y_0 = ta, y_1, \dots, y_n = tb$ of the Chebyshev alternance for $p_{n,ta,tb}-1$. It is easy to see that the points $y_0/t, y_1/t, \dots, y_n/t$ constitute a Chebyshev alternance for $q - 1$ and by Theorem~\ref{th:Chebyshev} we conclude that $q = p_{n,a,b}$. The equality $\varepsilon(n, ta, tb) = \varepsilon(n,a,b)$ easily follows.
\end{proof}

\section{Proof of Proposition \ref{prop:explicit} \label{apx:explicit}}
\begin{proof}
We denote $p_{2,a,b}$ and $\varepsilon(2,a,b)$ by $p$ and $\varepsilon$ respectively throughout this proof. From Proposition~\ref{prop:properties} we conclude that $p$ satisfies $p(a)=1-\varepsilon, p(e)=1+\varepsilon$, and $p(b)=1-\varepsilon$, where $e \in(a, b)$ and $\varepsilon=\|p-1\|_{C[a, b]}$. Since $p^{\prime}(e)=0$ it is clear that $p^{\prime}(x)=\alpha\left(e^{2}-x^{2}\right)$ and, therefore, $p(x)=\alpha\left(e^{2} x-x^{3} / 3\right)$ for some $\alpha \in \mathbb{R}$. Now the equation $p(a)=p(b)$ implies $e^{2}(a-b)=\left(a^{3}-b^{3}\right) / 3$, so $e^{2}=\left(a^{2}+a b+b^{2}\right) / 3$. That is, $p(x)=\alpha / 3\left(\left(a^{2}+a b+b^{2}\right) x-x^{3}\right)$ with some $\alpha \in \mathbb{R}$. In order to find $\alpha$ and $\varepsilon$ we calculate
\[
1-\varepsilon=p(a)=p(b)=\frac{\alpha}{3}\left(a^{2} b+b^{2} a\right), 1+\varepsilon=p(e)=\frac{2 \alpha}{3}\left(\frac{a^{2}+a b+b^{2}}{3}\right)^{3 / 2} .
\]
Thus,
\[
\begin{gathered}
\frac{\alpha}{3}\left(2\left(\frac{a^{2}+a b+b^{2}}{3}\right)^{3 / 2}+a^{2} b+b^{2} a\right)=2 \\
\alpha=\frac{6}{2\left(\frac{a^{2}+a b+b^{2}}{3}\right)^{3 / 2}+a^{2} b+b^{2} a} \\
\varepsilon=\frac{\alpha}{6}\left(2\left(\frac{a^{2}+a b+b^{2}}{3}\right)^{3 / 2}-a^{2} b-b^{2} a\right)=\frac{2\left(\frac{a^{2}+a b+b^{2}}{3}\right)^{3 / 2}-a^{2} b-b^{2} a}{2\left(\frac{a^{2}+a b+b^{2}}{3}\right)^{3 / 2}+a^{2} b+b^{2} a}
\end{gathered}
\]
\end{proof}

\section{Proof of Proposition \ref{prop:convergence}}
\label{apx:convergence_proof}

\begin{proposition}\label{prop:convergence_ab}
With the definitions \ref{eq:explicit3} the sequences $a_{n}$ and $b_{n}$ converge to 1, and $b_{n}-a_{n}$ converges to zero quadratically. More precisely,
\[
\lim_{n \to \infty} \frac{b_{n+1}-a_{n+1}}{\left(b_{n}-a_{n}\right)^{2}}=\frac{3}{8}.
\]
\end{proposition}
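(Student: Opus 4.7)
The plan is to deduce this statement directly from Proposition~\ref{prop:convergence}, which already gives the quadratic decay of the scalar error $\varepsilon_n$ together with the sharp constant $\lim \varepsilon_{n+1}/\varepsilon_n^2 = 3/4$. The crucial bookkeeping observation is that the recursion~\eqref{eq:explicit3} forces $(a_n, b_n)$ to become symmetric around $1$ after one step: by definition $a_{n+1} = 1 - \varepsilon(2, a_n, b_n) = 1 - \varepsilon_{n+1}$ and $b_{n+1} = 1 + \varepsilon_{n+1}$, so for every $n \geq 1$,
\[
b_n - a_n \;=\; 2\varepsilon_n.
\]
That single identity is really the only content; once it is in hand, the proposition reduces to algebra on the ratios.

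First I would record the symmetry identity above and note that it holds for all $n \geq 1$, regardless of the initial data $(a_0, b_0)$. Next, the convergence statement $a_n, b_n \to 1$ follows immediately: Proposition~\ref{prop:convergence} yields $\varepsilon_n \to 0$ (in fact super-exponentially, since $\varepsilon_{n+1} \leq \varepsilon_n^2$ forces $\varepsilon_n \to 0$ whenever $\varepsilon_1 < 1$, which is automatic because $p_{2,a_0,b_0}$ by construction satisfies $p_{2,a_0,b_0}([a_0,b_0]) \subset [1-\varepsilon_1, 1+\varepsilon_1]$ with $\varepsilon_1 \in [0,1)$). Hence $a_n = 1 - \varepsilon_n \to 1$ and $b_n = 1 + \varepsilon_n \to 1$.

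For the quadratic rate with constant $3/8$, I would simply compute
\[
\frac{b_{n+1} - a_{n+1}}{(b_n - a_n)^2} \;=\; \frac{2\varepsilon_{n+1}}{(2\varepsilon_n)^2} \;=\; \frac{1}{2}\cdot\frac{\varepsilon_{n+1}}{\varepsilon_n^2},
\]
and then pass to the limit using Proposition~\ref{prop:convergence} to obtain $\tfrac{1}{2}\cdot\tfrac{3}{4} = \tfrac{3}{8}$.

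There is no real obstacle: the statement is essentially a corollary of Proposition~\ref{prop:convergence}, repackaged in terms of interval width instead of approximation error. The only point worth stating explicitly is that the symmetrization $a_n + b_n = 2$ kicks in already at $n = 1$, so the ratio identity above is valid from $n = 1$ onwards, which is all one needs for a limit statement.
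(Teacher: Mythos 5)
Your proposal is circular in the context of the paper's logical structure, and the circularity is not incidental — it is load-bearing. In the paper, Proposition~\ref{prop:convergence_ab} is the auxiliary result whose proof carries the actual content (an explicit algebraic computation of $b_{n+1}-a_{n+1}$ in terms of $b_n-a_n$ using the closed form for $\varepsilon(2,a,b)$ from Proposition~\ref{prop:explicit}, followed by a limit evaluation using $\gamma(1,1)=2$), and Proposition~\ref{prop:convergence} is then \emph{derived from it} via precisely the identity $b_n-a_n=2\varepsilon_n$ that you use. Your proposal runs the implication in the opposite direction: you invoke $\lim \varepsilon_{n+1}/\varepsilon_n^2 = 3/4$ and $\varepsilon_{n+1}\leq\varepsilon_n^2$ as given, then multiply by $1/2$. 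But those facts are not available independently — they are exactly what the direct proof of Proposition~\ref{prop:convergence_ab} establishes. Once you observe $b_n-a_n=2\varepsilon_n$, the two propositions are trivially equivalent reformulations of a single statement, so deducing one from the other reduces the claim to itself. Nothing in your argument touches the formula for $\varepsilon(2,a,b)$, which is where the numbers $3/4$ and $3/8$ actually come from.

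A secondary issue: your parenthetical justification that $\varepsilon_1 \in [0,1)$ ``by construction'' is itself circular as written (you assert $\varepsilon_1 \in [0,1)$ in the course of arguing for it). This is easily repaired — comparing $p_{2,a,b}$ to the competitor $p(x)=2x/(a+b)$ gives $\varepsilon(2,a,b)\leq (b-a)/(a+b)<1$, or one can use the paper's normalization $a+b=2$ and the competitor $p(x)=x$ — but it would need to be said. To turn your sketch into a genuine proof you would have to replace the appeal to Proposition~\ref{prop:convergence} with the actual computation: under $a_n+b_n=2$, write $a_nb_n = 1-(b_n-a_n)^2/4$, expand $b_{n+1}-a_{n+1}=\tfrac12(b_{n+1}^2-a_{n+1}^2)$ using the explicit formula for $\varepsilon(2,a_n,b_n)$, extract the factor $(b_n-a_n)^2$, show $\varepsilon(2,a_n,b_n)<(b_n-a_n)/2$ to get monotonicity and convergence of $a_n,b_n$ to a common limit (which must be $1$), and then pass to the limit in the extracted ratio.
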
 

\begin{proof} At first it is easy to see that $a_{n}+b_{n}=2$ for all $n \in \mathbb{N}$. So, without loss of generality we can assume that $a+b=2$. With this assumption we can rewrite the function $\varepsilon(2, \cdot, \cdot)$ in the following form
\[
\varepsilon(2,a, b)=\frac{\left(\frac{4-a b}{3}\right)^{3 / 2}-a b}{\left(\frac{4-a b}{3}\right)^{3 / 2}+a b} .
\]
Now we claim that $\varepsilon(2,a, b)<(b-a) / 2$. This can be checked directly from the formula, but the implicit argument can be made based on the definition of $\varepsilon$. Since the polynomial $p(x)=x$ satisfies $\|p-1\|_{C[a, b]}=(b-a) / 2$ and $p$ is not optimal, we get that $\varepsilon(2,a,b)<(b-a) / 2$. From this we conclude that $a_{1}>a$ and $b_{1}<b$. By induction we get that $\left\{a_{n}\right\}$ is increasing and $\left\{b_{n}\right\}$ is decreasing. Since also $a_{n}<b_{n}$ for all $n$ we obtain that these sequences converge to some points $A$ and $B$ respectively. Clearly, $a<A \leqslant B<b$ and $A+B=2$. Since $\varepsilon$ is a continuous function, we can pass to the limit and obtain $A=1-\varepsilon(2,A, B)$ and $B=1+\varepsilon(2,A, B)$. Again, it can be checked directly that this implies $A=B$, but according to definition of $\varepsilon$ we get that provided $A<B, A=1-\varepsilon(2,A, B)$, and $B=1+\varepsilon(2,A, B)$ it follows that $p(x)=x$ is the best degree three odd polynomial approximation of unity of $[A, B]$, which is not true. Thus, $A=B=1$ and it remains to prove the quadratic rate of convergence.

Using the assumption $a+b=2$ we get that $a b=\left((a+b)^{2}-(a-b)^{2}\right) / 4=$ $1-(a-b)^{2} / 4$. Now with this we calculate (we let $\gamma(a, b)$ denote the expression $\left.\left(\frac{4-a b}{3}\right)^{3 / 2}+a b\right)$
\[
\begin{gathered}
b_{1}-a_{1}=
2\varepsilon(2,a, b)= 2\frac{\left(\frac{4-a b}{3}\right)^{3 / 2}-a b}{\left(\frac{4-a b}{3}\right)^{3 / 2}+a b}=2\frac{\left(\frac{4-a b}{3}\right)^{3} - a^{2} b^{2}}{\gamma(a, b)^{2}}=\frac{2}{27 \gamma(a, b)^{2}}\left((4-a b)^{3}-27 a^{2} b^{2}\right)= \\
\frac{2}{27 \gamma(a, b)^{2}}\left(64-48 a b-15 a^{2} b^{2}-a^{3} b^{3}\right)=\frac{2(b-a)^{2}}{27 \gamma(a, b)^{2}}\left(\frac{81}{4}-\frac{9}{8}(b-a)^{2}+\frac{(b-a)^{4}}{64}\right) .
\end{gathered}
\]
Since this calculation also works for $b_{n+1}-a_{n+1}$ and using that $a_{n}, b_{n} \rightarrow 1$ we get that
\[
\frac{b_{n+1}-a_{n+1}}{\left(b_{n}-a_{n}\right)^{2}}=\frac{2}{27 \gamma\left(a_{n}, b_{n}\right)^{2}}\left(\frac{81}{4}-\frac{9}{8}\left(b_{n}-a_{n}\right)^{2}+\frac{\left(b_{n}-a_{n}\right)^{4}}{64}\right) \rightarrow \frac{3}{8}
\]
as $\gamma(1,1)=2$.
\end{proof}
Now we are ready to prove Proposition \ref{prop:convergence}.
\begin{proof}
From the definition of $a_n, b_n$, it follows that \[b_{n}-a_{n}=(1-\varepsilon_n)-(1+\varepsilon_{n})=2\varepsilon_n.\]
Using Proposition \ref{prop:convergence_ab}, we get
\[\frac{\varepsilon_{n+1}}{\varepsilon_{n}^{2}}=\frac{\frac{1}{2}(b_{n+1}-a_{n+1})}{\frac{1}{4}\left(b_{n}-a_{n}\right)^{2}}\to\frac{3}{4}.\]
From the proof of Proposition \ref{prop:convergence_ab}, we know that
\[
\begin{aligned}&\frac{b_{n+1}-a_{n+1}}{(b_n-a_n)^2}=\frac{2}{27 \gamma\left(a_{n}, b_{n}\right)^{2}}\left(\frac{81}{4}-\frac{9}{8}\left(b_{n}-a_{n}\right)^{2}+\frac{\left(b_{n}-a_{n}\right)^{4}}{64}\right)=\\
&=\frac{2\left(\frac{81}{4}-\frac{9}{8}(2\varepsilon_n)^{2}+\frac{1}{64}(2\varepsilon_n)^4\right)}{27 \left(\left(\frac{4-(1-\varepsilon_n)(1+\varepsilon_n)}{3}\right)^{3/2}+(1-\varepsilon_n)(1+\varepsilon_n)\right)^{2}}=\\
&=\frac{\left(\frac{81}{2}-9\varepsilon_n^{2}+\frac{1}{2}\varepsilon_n^4\right)}{27 \left(\left(1+\frac{\varepsilon_n^2}{3}\right)^{3/2}+1-\varepsilon_n^2\right)^{2}},
\end{aligned}
\]
For $\varepsilon_n \in (0, 1)$, this expression is no more than $1/2$.
\[\frac{2\varepsilon_{n+1}}{4\varepsilon_{n}^2}=\frac{b_{n+1}-a_{n+1}}{(b_n-a_n)^2}\leq \frac{1}{2}\]
Therefore, $\varepsilon_{n+1}\leq\varepsilon_{n}^2.$
\end{proof}

\section{Proof of Corollary \ref{prop:num_steps} \label{apx:num_steps_proof}}
\begin{proof}
Let $[a_0, b_0]=[a_0, 1]$ be the starting segment, $0<a_0<1$. From \ref{eq:appr_error}, we can write approximation error after the first iteration:
\[\varepsilon_0=\frac{2\left(\frac{a_0^{2}+a_0 +1}{3}\right)^{3 / 2}-a_0^{2} - a_0}{2\left(\frac{a_0^{2}+a_0+1}{3}\right)^{3 / 2}+a_0^{2} + a_0}=1-\frac{2a_0^2+2a_0}{2\left(\frac{a_0^{2}+a_0+1}{3}\right)^{3 / 2}+a_0^{2} + a_0}<1-a_0.\]
After the first iteration, we start the recursion 
\[a_{n+1}=1-\varepsilon_n, b_{n+1}=1+\varepsilon_n.\]
From Proposition \ref{prop:convergence}, $\varepsilon_{n+1}\leq\varepsilon_{n}^2$ and by recursion we get
\[\varepsilon_{n}\leq\varepsilon_0^{2^{n}}\leq(1-a_0)^{2^{n}}.\]
Then we can find the number of steps, necessary to get the desired error of approximation $\varepsilon$:
\[n\leq \left\lceil \log_2\left(\frac{\ln\varepsilon}{\ln(1-a_0)}\right) \right\rceil.\]
\end{proof}

\section{Remez algorithm \label{apx:remez}}
Let us describe the main idea of the Remez algorithm. Assume that we are given a set $\{x_1, \dots, x_{n-1}\}$ of distinct points on the open interval $(a,b)$. 

\begin{itemize}
\item[1.] \textbf{Use $x_0 = a, x_1, \dots, x_{n-1}, x_n = b$ as a guess for the Chebyshev alternance points for $p_{n,a,b} - 1$.} It is easy to see that there is a unique pair $(p, \varepsilon)$ such that  $p \in L_n$ (that is, $p$ is odd and has degree $\le 2n-1$), $\varepsilon \in \mathbb R$, and $p(x_j) = 1 - (-1)^j\varepsilon$ for all $j = 0, 1, \dots, n$. The equations $p(x_j) = 1 - (-1)^j\varepsilon$ for $j = 0, \dots, n$ form a nonsingular system of linear equations in $n+1$ unknowns, namely, $\varepsilon$ and coefficients of $p$. Thus, $p$ and $\varepsilon$ are, indeed, uniquely determined by the above conditions.

\item[2.] \textbf{Solve the system $p(x_j) = 1 - (-1)^j\varepsilon$, where $j = 0, \dots, n$ to find $\varepsilon$ and coefficients of $p$.} Unfortunately, $x_0, \dots, x_n$ may not constitute a Chebyshev alternance for $p - 1$, as $p$ is not guaranteed to satisfy $p([a,b]) \subset [1 - \varepsilon, 1 + \varepsilon]$. However, it is clear that $p$ has exactly $n-1$ distinct extremal points $\{y_1, \dots, y_{n-1}\}$ in the open interval $(a,b)$.

\item[3.] \textbf{Find the extremal points $\{y_1, \dots, y_{n-1}\}$ of $|p-1|$ in the interval $(a, b)$, where $p$ has discovered coefficients.} The collection of points $y_0 = a, y_1, \dots, y_{n-1}, y_n = b$ (consisting of boundaries of the interval and extremal points of $p$) serves as a new guess for the Chebyshev alternance points for $p_{n,a,b} - 1$, and this guess is better than the previous. 

\item [4.] \textbf{Repeat algorithm starting with $y_0\dots y_n$.} By repeating the above construction with points $y_1, \dots, y_{n-1}$ instead of $x_1, \dots, x_{n-1}$, we obtain a new pair $(q, \delta)$ with similar properties. By a fairly straightforward argument one can show that $\delta \ge \varepsilon$ and $\|q - 1\|_{C[a,b]} \le \|p - 1\|_{C[a,b]}$. Iterating this process yields a sequence of polynomials that is guaranteed to converge to $p_{n,a,b}$. 
\end{itemize}
The pseudocode is presented in Algorithm \ref{alg:remez} below.

It should be noted that Remez algorithm is notorious for its instability when dealing with polynomials of sufficiently high degree. However, we have not observed an improvement of our methods when using polynomials of degrees higher than $5$.

\begin{algorithm}
\caption{Remez algorithm}
\label{alg:remez}
\begin{algorithmic}
\REQUIRE $n = (degree+1)/2$, $a < b$, max\_iterations $> 0$, tolerance
\ENSURE Optimal polynomial $p \in L_n$ and error bound $\varepsilon$
\STATE Initialize $x \gets [x_0, x_1, \dots, x_n]$ where $x_0 = a$, $x_n = b$
\STATE $\text{iteration\_count} \gets 0$
\STATE $\text{prev\_epsilon} \gets 0$
\FOR{$\text{iteration\_count}=1\dots\text{max\_iterations}$}
    \STATE Construct $(n+1) \times (n+1)$ matrix $A$, where $A_{ij}=x_i^{2j+1}$ for $j=0\dots n-1$, $A_{i, n}=(-1)^{i+1}$ 
    \STATE Construct right-hand side vector $b$, where $b_i = 1$
    \STATE $\text{solution} \gets \text{SolveLinearSystem}(A, b)$
    \STATE $p_{\text{coeffs}} \gets \text{solution}[0{:}n]$ \COMMENT{Polynomial coefficients}
    \STATE $\varepsilon \gets \text{solution}[n]$ \COMMENT{Error parameter}
    \STATE Find all local extrema $y_1, \dots, y_{n-1}$ of $|p(x) - 1|$ in $(a,b)$
    \STATE Update points: $x \gets [a, y_1, y_2, \dots, y_{n-1}, b]$
    \STATE $\varepsilon_{new} \gets \max_i(|p(y_i)-1|)$ \COMMENT{New error}
    \IF {$\varepsilon < \varepsilon_{new} + \text{tolerance}$}
    \STATE \textbf{Return:} $(p, \varepsilon)$
    \ENDIF
\ENDFOR
\STATE\textbf{Return:} $(p, \varepsilon)$
\end{algorithmic}

\end{algorithm}

\newpage
\section{Proof of Proposition \ref{prop:kellerjordantype} \label{apx:kellerjordantype}}
\begin{proof}
\ref{prop:kellerjordantype_adef}. $d \in \mathbb N, d\geq 2$ and consider the function $E(t) = \varepsilon(d,t,1 + \delta)$. It is easy to see that $E$ is continuous, $E$ monotonically decreases on the interval $t \in (0, 1+\delta)$ and satisfies $E(t) \to 1$ as $t \to 0$, and $E(t) \to 0$ as $t \to 1 + \delta$. Thus, there exists a unique $a = a(d,\delta) \in (0, 1 + \delta)$ such that $E(a) = \delta$. Note that $E(1 - \delta) < \delta$, as the polynomial $p(x) = x$ approximates unity with error $\delta$ on the interval $[1 - \delta, 1 + \delta]$, even though it is not optimal (since $d \ge 2$). Thus, the error of the best approximation on $[1 - \delta, 1 + \delta]$ has to be strictly less than $\delta$. Therefore, $E(1 - \delta) < \delta$, so $a(d, \delta) \in (0, 1 -\delta)$.

\ref{prop:kellerjordantype_unique} and~\ref{prop:kellerjordantype_convexity}. Let $a$ denote the solution of the equation $\varepsilon(d, a, 1+\delta) = \delta$ and consider the corresponding polynomial $q_{d, \delta} = p_{d, a, 1 + \delta}$. By definition $q_{d, \delta}(x) \in [1 - \delta, 1 + \delta]$ for $x \in [a, 1 + \delta]$. Moreover, from Proposition~\ref{prop:properties}~\ref{prop:propertiesiii} it follows that $q_{d, \delta}$ is concave and increasing on the interval $[0,a]$, so from the fact $q_{d, \delta}(a) = 1 - \delta$ we derive the inequalities $1 - \delta \ge q_{d, \delta(x)} \ge (1 - \delta)x/a$ for $x \in [0,a]$. Thus, $q_{d, \delta} \in \mathcal P_{d, \delta}$. Note that, in particular, we have proved the inequality of~\ref{prop:kellerjordantype_convexity} for $q_{d, \delta}$. Now we prove that for all $p \in \mathcal P_{d, \delta}$ such that $p \ne q$ we have $\mathfrak a_{\delta}(p) > a$. From the definition of $\mathfrak a_\delta(p)$ we get that $\|p - 1\|_{C[\mathfrak a_\delta(p), 1 + \delta]} \le \delta$, hence, $E(\mathfrak a_\delta(p) = \varepsilon(d, \mathfrak a_\delta(p), 1 + \delta) \le \delta$. Thus, by monotonicity of $E$ we infer that $\mathfrak a_\delta(p) \ge a$. If the equality $\mathfrak a_\delta(p) = a$ holds, then $p$ is an approximation of unity on $[a, 1+\delta]$ with the error $\delta$, so it coincides with $q_{d, \delta}$ by the uniqueness of the best polynomial approximation. Otherwise, $\mathfrak a_\delta(p) > a$.

\ref{prop:kellerjordantype_1stOptProblem}. Let us state an {\it{auxiliary fact}}. Assume that polynomials $p,q \in L_d$ and points $0 < y_1 < y_2 < \dots < y_d$ satisfy the inequalities $(-1)^{j-1}(q(y_j) - p(y_j)) \ge 0$ hold for all $j = 1, \dots, d$. Then $q'(0) \ge p'(0)$. Assuming that this fact is true we can easily finish the proof. Indeed, assume that $x_0 = a(d, \delta) < x_1 < \dots < x_d = 1 + \delta$ are the alternance points of $q_{d, \delta}$ and that $x_2 \ge 1 - \delta$. Now consider arbitrary $p \in \mathcal P_{d, \delta}$. We claim that $(-1)^{j-1}(q_{d, \delta}(x_j) - p(x_j)) \ge 0$ for all $j = 1, \dots, d$. Indeed, if $j = 1$, then $q_{d, \delta}(x_1) = 1 + \delta \ge p(x_1)$ by definition of $\mathcal P_{d, \delta}$. If $j \ge 2$, then $x_j \ge 1 - \delta$ and the inequality holds since $q(x_j) = 1 - (-1)^j \delta$ and $|p(x_j) - 1| \le \delta$. Thus, it remains to prove the foregoing auxiliary fact. Let us fix polynomials $p,q \in L_d$ and points $0 < y_1 < y_2 < \dots < y_d$ such that the inequalities $(-1)^{j-1}(q(y_j) - p(y_j)) \ge 0$ hold for all $j = 1, \dots, d$. Consider polynomials $\lambda_j \in L_d$, $j = 1,\dots, n$ such that $l_j(x_k) = \delta_{jk}$, where $\delta_{jk}$ is the Kronecker's symbol. It is easy to verify that the polynomials $l_j$ indeed exist and are unique. Moreover, $p$ and $q$ can be recovered by an analog of the Lagrange's interpolation formula $p = \sum_{j = 1}^d p(x_j) \lambda_j$ and $q = \sum_{j = 1}^d q(x_j) \lambda_j$. Thus, $q'(0) - p'(0) = \sum_{j = 1}^d (q(x_j) - p(x_j))\lambda_j'(0)$. The proof finishes by observing that $(-1)^{j-1} \lambda_j'(0) > 0$. To prove this observation note that all $2d-1$ roots of $\lambda_j$ are simple and real. Therefore, the sign of the derivative $\lambda_j'$ alternates on the roots of $\lambda_j$ enumerated in increasing order. That is, in the vector 
\begin{equation}\label{eq:vecOfDerivatives}
    \begin{pmatrix}
        \lambda_j'(0) & \lambda_j'(x_1) & \dots & \lambda_j'(x_{j-1}) & \lambda_j'(x_{j+1}) & \dots & \lambda_j'(x_d)
    \end{pmatrix}
\end{equation}
the signs of components are alternating. Finally, since $\lambda_j(x_j) = 1 > 0$ it follows that $\lambda_j'(x_{j-1}) \ge 0$ and $\lambda_j'(x_{j+1}) \le 0$ (if $j = 1$ or $j = d$ only one of these inequalities should be stated). The inequality $(-1)^{j-1} \lambda_j'(0) > 0$ now easily follows from the alternating property of the vector~\eqref{eq:vecOfDerivatives}.

\end{proof}

\begin{remark}\leavevmode
    \begin{enumerate}
        \item The value $a(d, \delta)$ introduced in Proposition~\ref{prop:kellerjordantype}~\ref{prop:kellerjordantype_adef} is given there as the solution of the equation $\varepsilon(d, a, 1+\delta) = \delta$. This allows to evaluate $a(d, \delta)$ by using binary search (given any algorithm that computes the function $\varepsilon$), since the left part of this equation is a continuous and decreasing function of $a$.
        \item From Proposition~\ref{prop:kellerjordantype}~\ref{prop:kellerjordantype_1stOptProblem} it is easy to see that $q_{d, \delta}$ is the solution to the problem~\eqref{eq:1stOptProblem} for $d = 2$. For larger degrees this statement is no longer true in general. However, it stays true provided $\delta$ is large enough. For example, by calculating $q_{d, \delta}$ numerically we observed that the condition of Proposition~\ref{prop:kellerjordantype}~\ref{prop:kellerjordantype_1stOptProblem} is satisfied for $d = 3, \delta \ge 0.073$ and $d = 4, \delta \ge 0.201$. In general, for each $d$ there exists $\delta_d \in (0,1)$ such that $q_{d, \delta}$ is the solution to the problem~\eqref{eq:1stOptProblem} for $\delta \ge \delta_d$.
        \item It is easy to derive the formula for the classical Newton-Schulz iterations from the polynomials $q_{d,\delta}$. Indeed, consider $d = 2$ and then pass to the limit $\delta \to 0$. Clearly, the polynomial $q_{2, \delta}(x)$ converges to $p(x)$ such that $p(1) = 1$ and $p^\prime(1) = 0$. There is only one odd polynomial of degree three satisfying these properties, namely, $p(x) = 3x/2 - x^3/2$, which is used in the classical Newton-Schulz iterations.
    \end{enumerate}
\end{remark}

\section{Experimental details\label{apx:hyperparameters}}
NanoGPT  \citep{modded_nanogpt_2024} is trained on a subset of 0.8B training tokens of FineWeb dataset \citep{penedo2024fineweb} for 6200 steps with initial learning rate 0.0036 and trapezoidal schedule (1800 warmdown steps) on 1 A100 GPU. For normalization in our method, we used Gelfand's formula. For normalization in original Muon optimizer, Frobenius norm was used. 

In practice, we have not observed any noticeable difference in runtime of Muon with different polynomials in experiment with NanoGPT. Each training step required 2.5-2.9 seconds for different polynomials. Theoretically this can be explained as follows. The FLOP overhead of Muon over SGD is $(T/3)m/B$ (see runtime analysis in \citep{jordan2024muon}), where $m$ is matrix dimension, $B$ - sequence length, by $T$ we will denote number of matmuls ($T=15$ for original Muon). The difference in overhead of Muon with polynomials with $T_1$ and $T_2$ matmuls is $((T_1-T_2)/3)m/B$. In our experiment with NanoGPT, m=768, B=524288, the difference with original Muon is $T_1-T_2\leq 3$ so overhead is $((T_1-T_2)/3)m/B\leq 0.0015$.

For training Wide ResNet-16-10 on CIFAR-10 with Riemannian SGD and ADAM, the learning rate is set to $0.2$ and $0.4$ for parameters restricted to Stiefel manifold and $0.01$ otherwise. For standard SGD and ADAM learning rate is set to $0.1$ and $0.0003$ respectively. The experiments were run on 1 V100 GPU. For CANS retraction, one iteration of Algorithm \ref{alg:cans} was enough in practice to perform orthogonalization.

\section{Riemannian SGD and ADAM on Stiefel manifold \label{apx:algorithms}}

\begin{table}
\caption{\label{table:wrn_training} Accuracy and training time for Wide ResNet-16-10 on CIFAR-10 using SGD.}
\centering
\begin{tabular}{ccc}
\toprule
Retraction & Accuracy & Time per epoch (s) \\
\midrule
 - & \textbf{95.97} & \textbf{34.9} \\
 Cayley \citep{li2020efficient} & 94.81 & 69.5 \\
 Cayley (Woodbury) & 94.93 & 68.8 \\
 QR & 94.80 & 61.0 \\
 CANS & 94.73 & \underline{43.6}\\
\bottomrule
\end{tabular}
\end{table}

Table \ref{table:wrn_training} shows results of training Wide ResNet-16-10 on CIFAR-10 with SGD on Stiefel manifold.

Algorithms \ref{alg:sgd} and \ref{alg:adam} present Riemannian SGD and Adam on Stiefel manifold. Algorithm \ref{alg:cayley_woodbury} presents algorithm of performing Cayley retraction using Woodbury formula.

\begin{algorithm}{tbh}
\begin{algorithmic}
\caption{SGD with momentum on Stiefel manifold}\label{alg:sgd}
\STATE \textbf{Input} Momentum $\beta$, learning rate $\alpha$.
\STATE Initialize $X_1 \in \mathbb{R}^{n\times p}$ as orthonormal matrix.
\FOR {$1 \dots \texttt{n\_iters}$}
\STATE $M_{k+1} = \beta M_k - G(X_k)$
\STATE $M_{k+1}=M_{k+1} - \frac{1}{2} X_k (M_{k+1}^TX_k + X_k^TM_{k+1})$
\STATE $X_{k+1} = \text{Retr}(X_k+\alpha M_{k+1})$
\ENDFOR
\end{algorithmic}
\end{algorithm}

\begin{algorithm}{tbh}
\begin{algorithmic}
\caption{Adam on Stiefel manifold}\label{alg:adam}
\STATE \textbf{Input} Momentum coefficients $\beta_1, \beta_2$, learning rate $\alpha$.
\STATE Initialize $X_1 \in \mathbb{R}^{n\times p}$ as orthonormal matrix.
\FOR {$k$ in $1 \dots \texttt{n\_iters}$}
\STATE $v_{k+1}=\beta_2v_k+(1-\beta_2)\|G(X_k)\|_F^2$
\STATE $\hat{v}_{k+1} = v_{k+1}/(1-\beta_2^k)$
\STATE $M_{k+1} = \beta_1 M_k - (1-\beta_1)G(X_k)$
\STATE $\hat{M}_{k+1}=M_{k+1}/(1-\beta_1^k)$
\STATE $\hat{M}_{k+1}=\hat{M}_{k+1} - \frac{1}{2} X_k (\hat{M}_{k+1}^TX_k + X_k^T\hat{M}_{k+1})$
\STATE $X_{k+1} = \text{Retr}(X_k-\alpha \hat{M}_{k+1}/\sqrt{\hat{v}_{k+1}+\epsilon})$
\STATE $M_{k+1}=(1-\beta_1^k)\hat{M}_{k+1}$
\ENDFOR
\end{algorithmic}
\end{algorithm}

\begin{algorithm}{tbh}
\begin{algorithmic}
\caption{Cayley retraction via Woodbury formula}\label{alg:cayley_woodbury}
\STATE \textbf{Input} Parameters $X_k$, step direction $M_{k+1}$, learning rate $\alpha$.
\STATE $L = [\alpha M_{k+1}; X_k]$
\STATE $R = \left[\begin{array}{c}X_k^T \\ \alpha(M_{k+1}^TX_k X_k^T - M_{k+1}^T)\end{array}\right]$
\STATE $Y = X_k + \frac{1}{2} \alpha M_{k+1}$
\STATE $X_{k+1} = Y + \frac{1}{2} L (I - \frac{1}{2} R L)^{-1} R Y$
\STATE \textbf{Return:} $X_{k+1}=CayleyRetr(X_k+\alpha M_{k+1})$
\end{algorithmic}
\end{algorithm}

\newpage
\section{Polynomials \label{apx:coefficients}}

Coefficients are presented from left to right from the minimal degree to maximum. For example, for coefficients [(a, b), (c, d, e)] the composition is $p_2(p_1(x)), $ where $p_1(x)=ax+bx^3$, $p_2(x)=cx+dx^3+ex^5$.

Original Muon coefficients of 3-rd order polynomial for any number of iterations: [(3.4445, -4.7750, 2.0315)]*num\_iters (green in Figure \ref{fig:polynoms}, \ref{fig:test_loss}).

CANS, eps=0.3, order=3, iter=7, mm=14 (black in  Figure \ref{fig:polynoms})\\
$[(5.181702879894027, -5.177039351076183),\\
 (2.5854225645668487, -0.6478627820075661),\\
 (2.565592012027513, -0.6452645701961278),\\
 (2.5162233474315263, -0.6387826202434335),\\
 (2.401068707564606, -0.6235851252726741),\\
 (2.1708447617901196, -0.5928497805346629),\\
 (1.8394377168195162, -0.5476683622291173)]$

CANS, eps=0.3, order=5, iter=5, mm=15 (purple in Figure \ref{fig:polynoms})\\
$[(8.492217149995927, -25.194520609944842, 18.698048862325017),\\
4.219515965675824, -3.1341586924049167, 0.5835102469062495),\\
(4.102486923388631, -3.0527342942729288, 0.5742243021935801),\\
(3.6850049522776493, -2.756862315006488, 0.5405198817097779),\\
2.734387280007103, -2.036641382834855, 0.4592314693659632)]$

CANS, eps=0.00188, order=3, iter=9, mm=18 (purple in Figure \ref{fig:accurate_polynoms})\\
$[(5.179622107852338, -5.174287102735334),\\ (2.5836099434139492, -0.6476254200945953), \\(2.5610021062961206, -0.6446627537769272), \\(2.505058237036672, -0.6373139418181356), \\(2.3764825571306125, -0.6203257475007262), \\(2.1279007426858794, -0.5870609391939776), \\(1.7930526112541054, -0.5412446350453286), \\(1.5582262242936464, -0.5082920767544266), \\(1.5021988305175455, -0.5003140810786916)]$

CANS, eps=0.00443, order=3, iter=9, mm=18 (blue in Figure \ref{fig:accurate_polynoms})\\
$[(5.182503604966906, -5.178098480082684), \\
(2.586120737395915, -0.6479542005271643), \\
(2.567364126726186, -0.6454968804392178), \\
(2.520560084348265, -0.6393528082067044), \\
(2.410759275435182, -0.6248683598710716), \\
(2.1883348130094173, -0.5952022073798908), \\
(1.8595760874873613, -0.5504490972723968), \\
(1.589020160467417, -0.5126569802066718), \\
(1.5051653981684994, -0.5007377068751799)]$

CANS, eps=0.0035, order=3, iter=9, mm=18 (grey in Figure \ref{fig:test_loss})\\
$[(5.181724335835382, - 5.177067731075524),\\
(2.585441267930541, - 0.6478652310697918),\\
(2.5656394547047783, - 0.6452707898813249),\\
(2.5163392603382473, - 0.6387978622974516),\\
(2.401326686185833, - 0.6236192975654269),\\
(2.17130618635129, - 0.5929118810597139),\\(1.8399595521688579, - 0.5477404797274893),\\
(1.5792011481985957, - 0.5112666878668612),\\
(1.5040821254913361, - 0.500583031372834)]$

CANS, eps=0.3, order=5, iter=4, mm=12 (purple in Figure \ref{fig:test_loss})\\
$[(8.420293602126344, -24.910491192120688, 18.472094206318726), \\
(4.101228661246281, -3.0518555467946813, 0.5741241025302702), \\
(3.6809819251109155, -2.75396502307162, 0.5401902781108926), \\
(2.7280916801566666, -2.0315492757300913, 0.45866431681858805)]$

Jiacheng's, order=5, iter=6, mm=18 (green in Figure \ref{fig:accurate_polynoms})\\
$[(3955/1024, -8306/1024, 5008/1024),\\
(3735/1024, -6681/1024, 3463/1024),\\
(3799/1024, -6499/1024, 3211/1024),\\
(4019/1024, -6385/1024, 2906/1024),\\
(2677/1024, -3029/1024, 1162/1024),\\
(2172/1024, -1833/1024,  682/1024)]$

Jiacheng's, order=5, iter=5, mm=18 \\
$[( 3839 / 1024, -8060 / 1024,  4883 / 1024),\\
       ( 3851/ 1024, -7277/ 1024,  3966/ 1024),\\
       ( 4011/ 1024, -6812/ 1024,  3318/ 1024),\\
       ( 2738/ 1024, -3261/ 1024,  1321/ 1024),\\
       ( 2172/ 1024, -1833/ 1024,   683/ 1024)]$

\section{Time}
The number of matmuls is proportional to FLOPS and to the spent time up to the errors. Table \ref{table:time_figure} below shows time for Figure \ref{fig:random} (on CPU in seconds).

\begin{table}[tbh]
\centering
\caption{\label{table:time_figure} Time for matrix orthogonalization in Figure 1 (on CPU in seconds).}
\begin{tabular}{c|cc}
\toprule
Method  & Matmuls & Time\\
\midrule
classic Newton-Schultz & 60 & 6.57 \\
3-rd order & 26 & 2.70 \\
5-th order & 24 & 1.96 \\
classic Newton-Schultz, Gelfand & 60 & 6.57 \\
3-rd order, Gelfand, $a_0=1e-3$ & 32 & 3.27 \\
5-th order, Gelfand, $a_0=1e-3$ & 30 & 2.79 \\
3-rd order, Gelfand, $a_0=1e-7$ & 44 & 4.72 \\
5-th order, Gelfand, $a_0=1e-7$ & 42 & 3.80\\
\bottomrule
\end{tabular}

\end{table}

\section{Ablation of matrix normalization}

We compare the effect of normalization before orthogonalization in the Muon optimizer. Figure \ref{fig:gelfand_vs_frobenius} shows that Muon with Gelfand's normalization has improved convergence.

\begin{figure}[tbh] \centering\includegraphics[width=1\linewidth]{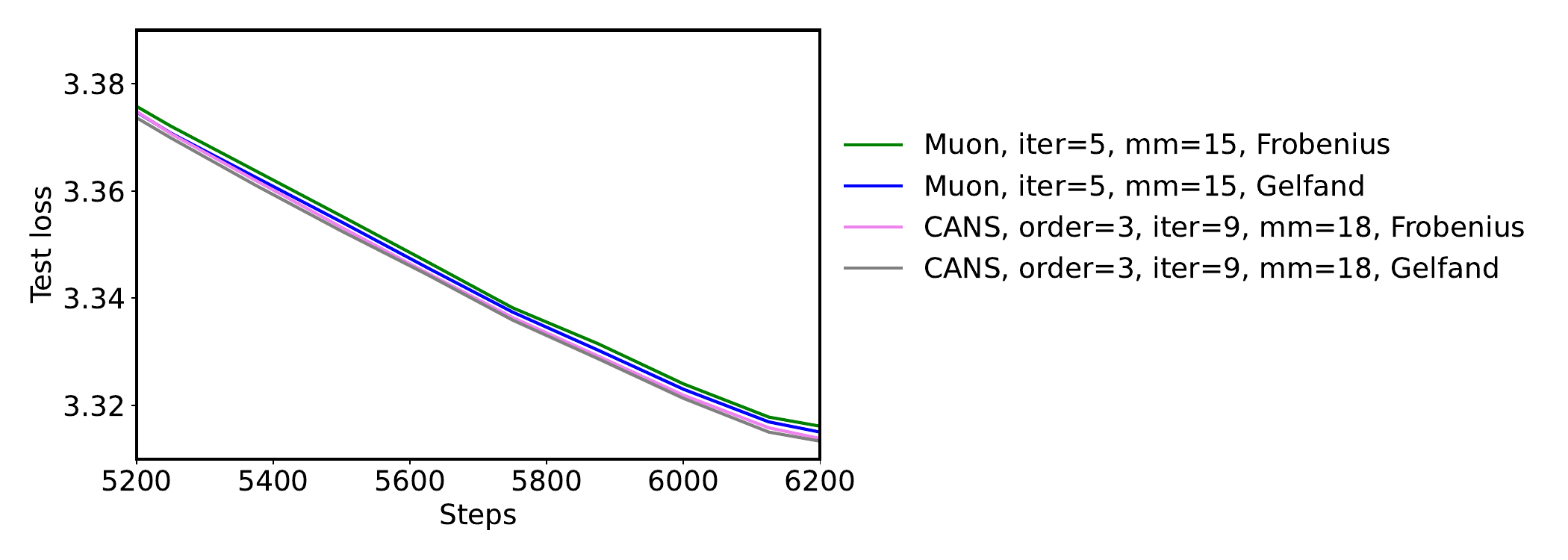} \caption{\label{fig:gelfand_vs_frobenius}  NanoGPT test loss curves for Muon with Gelfand's and Frobenius normalization before orthogonalization.}\end{figure}

Figures \ref{fig:full_train} and  \ref{fig:full_test} show the full training and test loss curves of NanoGPT. Figure \ref{fig:full_train_smooth} shows smoothed full training loss curve.

\begin{figure}[tbh] \centering
\includegraphics[width=1\linewidth]{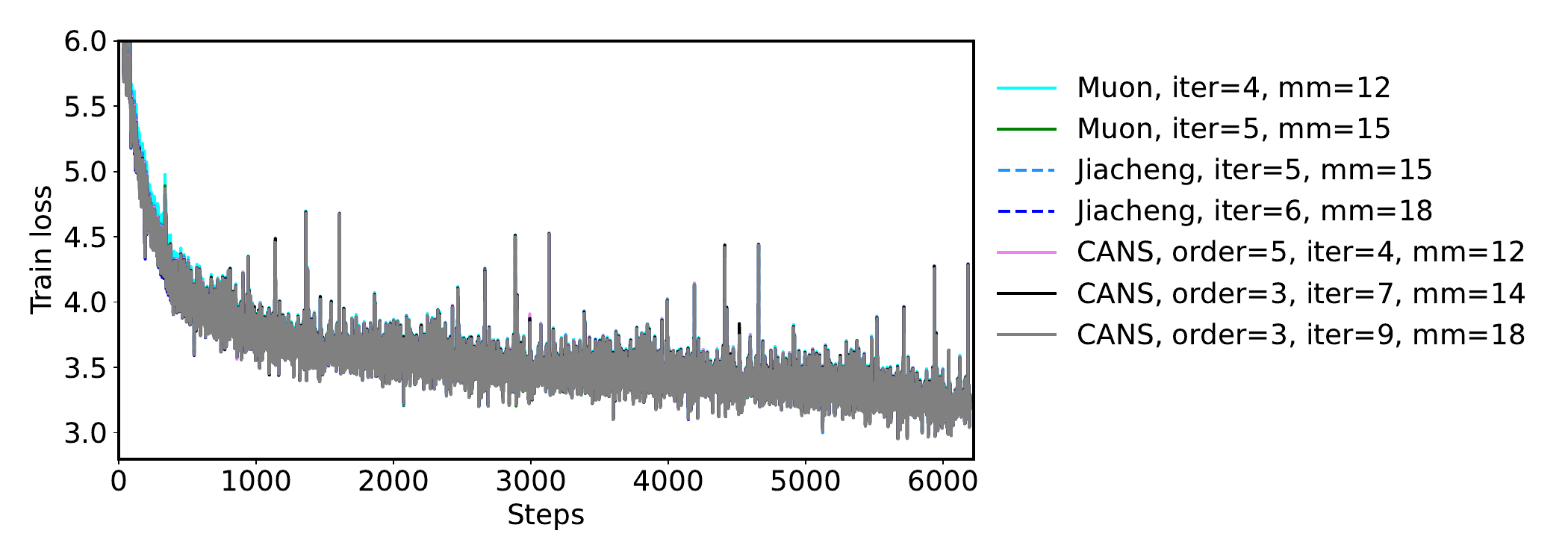} \caption{\label{fig:full_train} NanoGPT full train loss curve.}\end{figure}

\begin{figure}[tbh] \centering
\includegraphics[width=1\linewidth]{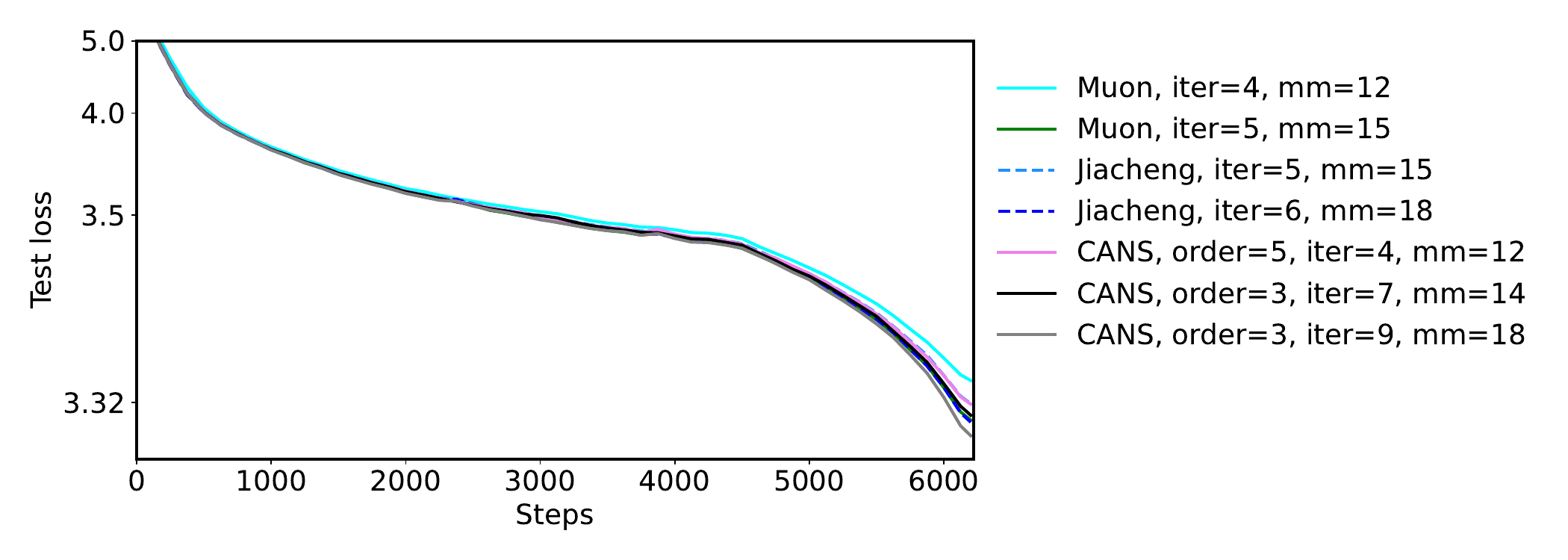} \caption{\label{fig:full_test} NanoGPT full test loss curve.}\end{figure}

\begin{figure}[tbh] \centering
\includegraphics[width=1\linewidth]{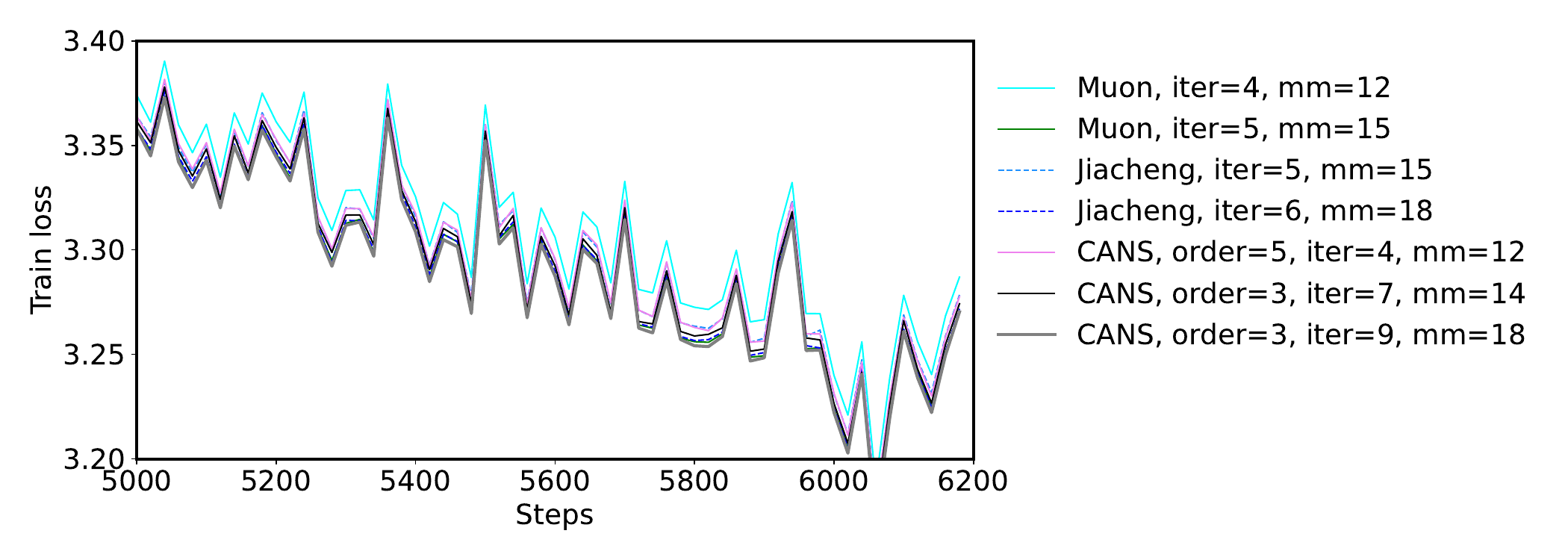} \caption{\label{fig:full_train_smooth}NanoGPT smoothed train loss curve.}\end{figure}

\section{Time comparison of retractions}
Table \ref{table:retr} shows time (in seconds) for retraction of $n\times p$ matrix measured on A100. For a small step-size, it is enough to make 2 iterations of Cayley or 1 CANS iteration to reach nearly the same desired accuracy of orthogonalization.

\begin{table}
\caption{\label{table:retr} Time for retraction of $n\times p$ matrix. }
\centering

\begin{tabular}{cc|ccc}
\toprule
n & p & Cayley & QR & CANS \\ 
\midrule
1024 & 32 & 0.11 & 0.28 & 0.07 \\
1024 & 64 & 0.13 & 0.47 & 0.07 \\
1024 & 128 & 0.19 & 0.86 & 0.08 \\
1024 & 256 & 0.28 & 1.83 & 0.11 \\
1024 & 512 & 0.43 & 3.55 & 0.23 \\
1024 & 1024 & 0.70 & 6.61 & 0.59 \\
\midrule
2048 & 32 & 0.22 & 0.32 & 0.07 \\
2048 & 64 & 0.29 & 0.54 & 0.08 \\
2048 & 256 & 0.77 & 2.35 & 0.15 \\
2048 & 512 & 1.33 & 4.54 & 0.43 \\
2048 & 1024 & 2.53 & 9.08 & 1.11 \\
2048 & 2048 & 4.98 & 18.03 & 3.99 \\
\midrule
4096 & 32 & 0.68 & 0.48 & 0.08 \\
4096 & 64 & 0.96 & 0.89 & 0.09 \\
4096 & 512 & 5.08 & 7.99 & 0.71 \\
4096 & 1024 & 9.74 & 15.84 & 2.13 \\
4096 & 2048 & 18.89 & 34.02 & 8.20 \\
4096 & 4096 & 37.04 & 68.19 & 30.57 \\
\midrule
8192 & 32 & 2.46 & 0.67 & 0.08 \\
8192 & 64 & 3.59 & 1.30 & 0.10 \\
8192 & 1024 & 37.42 & 24.40 & 4.20 \\
8192 & 2048 & 73.94 & 55.65 & 16.64 \\
8192 & 4096 & 145.71 & 130.80 & 62.68 \\
8192 & 8192 & 290.25 & 321.01 & 236.78 \\
\bottomrule
\end{tabular}

\end{table}

\end{document}